\newtheorem{theorem}{Theorem}
\newtheorem{proposition}[theorem]{Proposition}
\newtheorem{lemma}[theorem]{Lemma}
\newtheorem{definition}[theorem]{Definition}
\newtheorem{corollary}[theorem]{Corollary}
\newtheorem{remark}[theorem]{Remark}
\renewcommand{\H}{\mathcal{H}}
\newcommand{\CP}{\mathbb{CP}}
\newcommand{\RR}{\mathbb{R}}
\newcommand{\ZZ}{\mathbb{Z}}
\newcommand{\RP}{\mathbb{RP}}
\newcommand{\U}{{\rm{U}}}
\newcommand{\LB}{{\rm{LB}}}
\renewcommand{\i}{i}
\newcommand{\w}{\omega}
\numberwithin{equation}{section}
\numberwithin{theorem}{section}
\numberwithin{table}{section}
\numberwithin{table}{section}
\begin{document}
\bibliographystyle{amsalpha} 
\title[Anti-self-dual metrics]{An index theorem for 
anti-self-dual \\ orbifold-cone metrics}
\author{Michael T. Lock}
\address{Department of Mathematics, University of Wisconsin, Madison, 
WI, 53706}
\email{lock@math.wisc.edu}
\author{Jeff A. Viaclovsky}
\address{Department of Mathematics, University of Wisconsin, Madison, 
WI, 53706}
\email{jeffv@math.wisc.edu}
\thanks{Research partially supported by NSF Grant DMS-1105187}
\begin{abstract}
Recently, Atiyah and LeBrun proved versions of the Gauss-Bonnet  
and Hirzebruch signature Theorems for metrics with edge-cone
singularities in dimension four, which they applied to obtain an inequality
of Hitchin-Thorpe type for Einstein edge-cone metrics.
Interestingly, many natural examples of edge-cone metrics in dimension 
four are anti-self-dual (or self-dual depending upon choice of 
orientation). On such a space there is an important elliptic complex called the 
anti-self-dual deformation complex, whose index gives 
crucial information about the local structure of the moduli 
space of anti-self-dual metrics. In this paper, 
we compute the index of this complex in the orbifold case, and give several applications. 
\end{abstract}
\date{September 12, 2012}
\maketitle

\section{Introduction}
\label{I}

We will be concerned with metrics 
with the following type of singularities. 

\begin{definition}
\label{def}
{\em
Let $M$ be a smooth four-manifold with a smoothly embedded 
two-dimensional submanifold $ \Sigma \subset M$. 
We will say that $g$ is an }orbifold-cone metric\em{ on $(M,\Sigma)$ with cone angle $2\pi/p$, where $p\geq 1$ is an integer, if
$g$ is a smooth metric on $M\setminus\Sigma$ and, near any point of $\Sigma$, the metric is locally the quotient of a smooth $\Gamma$-invariant metric on $\mathbb{R}^4$ around the origin for a cyclic group $\Gamma\subset \rm{U}(2)$ with generator given by}
\begin{align*}
(z_1,z_2)\mapsto (z_1,e^{i 2\pi/p}z_2).
\end{align*}
We will refer to $\Sigma$ as the \em{singular set}.
\end{definition}

Around any point $q\in \Sigma$ there exists a neighborhood $U_q=\tilde{U}_q/\Gamma$, where $\tilde{U}_q$ is a neighborhood of the origin in $\mathbb{R}^4$.  We can choose coordinates $(x_1,x_2,y_1, y_2)$ on $\tilde{U}_q$ so that $\Sigma$ is given by $y_1=y_2=0$.  Then, after changing coordinates to $(x_1,x_2,r,\theta)$
by setting $y_1=r \cos(\theta)$ and $y_2=r \sin(\theta)$, the metric on $U_q\cap (M\setminus\Sigma)$ can be expressed as
\begin{align}
g=dr^2+\bigg(\frac{r^2}{p^2}\bigg)d\theta^2+f_{ij}(x)dx^i\otimes dx^j+r^2h,
\end{align}
where the $f_{ij}(x)$ are smooth functions on $\Sigma$ that are symmetric in $i$ and $j$, and $h$ is a smooth symmetric two-tensor field.

Viewing orbifold-cone metrics in this way, it is clear that they form a subclass of edge-cone metrics, which are a generalization of Definition \ref{def} allowing 
arbitrary cone angle $2\pi \beta$ for any $\beta \in \mathbb{R}$, 
see \cite{AtiyahLeBrun} for the full definition.  Edge-cone metrics have recently been of great interest in 
K\"ahler geometry, see for example \cite{Brendle, Donaldson, JMR}. 
For an edge-cone metric $g$, define 
\begin{align}
\label{GB}
\chi_{orb}(M) &= \frac{1}{8\pi^2}\int_{M}\Big(|W|^2-\frac{1}{2}|E|^2+\frac{1}{24}R^2\Big)dV_{g}\\
\label{HST}
\tau_{orb}(M) & = \frac{1}{12\pi^2}\int_{M}(|W^+|^2-|W^-|^2)dV_{g},
\end{align}
where $W$ is the Weyl tensor, $E$ is the traceless Ricci tensor, $R$ is the scalar curvature and 
$W^{\pm}$ are the self-dual and anti-self-dual parts of the Weyl tensor defined below.

We let $[\Sigma]^2$ denote the self-intersection of $\Sigma$ in $M$, which is identified with the Euler class 
of the normal bundle of $\Sigma$ paired with the fundamental class of $\Sigma$.  In the case that the singular set 
is non-orientable, we can understand this by pulling the Euler class of the normal bundle back to the orientable double cover, evaluating it on the corresponding fundamental class and then dividing by two.

In \cite{AtiyahLeBrun}, Atiyah-LeBrun proved the following versions of the 
Gauss-Bonnet and Hirzebruch signature Theorems for edge-cone metrics:
\begin{theorem}[Atiyah-LeBrun \cite{AtiyahLeBrun}]
Let $g$ be an edge-cone metric
on a smooth four-manifold $M$ with singular set
$\Sigma \subset M$ and with cone angle $2 \pi \beta$.
Then 
\begin{align}
\label{euler-signature}
\begin{split}
\chi_{orb}(M)
&=\chi(M) - (1 - \beta) \chi(\Sigma) \\
\tau_{orb}(M)
&=\tau(M) -\frac{1}{3}(1 - \beta^2)[{\Sigma}]^2.
\end{split}
\end{align}
\end{theorem}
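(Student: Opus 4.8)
The two displayed identities are curvature versions of Gauss--Bonnet and of the signature theorem, so the plan is to start from the fact that both hold, exactly as written, when $g$ is a smooth metric on a closed four-manifold, and to isolate the ``defect'' produced by the cone singularity. I would first show that there are \emph{universal} functions $c_\chi,c_\tau$ of the cone angle alone --- independent of $M$, of $\Sigma$, and of the particular edge-cone metric --- with
\begin{align*}
\chi(M)-\chi_{orb}(M)=c_\chi(\beta)\,\chi(\Sigma),\qquad
\tau(M)-\tau_{orb}(M)=c_\tau(\beta)\,[\Sigma]^2,
\end{align*}
and that $c_\chi$ and $c_\tau$ are polynomials in $\beta$ of degree at most $1$ and at most $2$; and then pin these polynomials down by evaluating them at $\beta=1/k$.

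For the first part I would delete a radius-$\epsilon$ tubular neighbourhood $N_\epsilon(\Sigma)$ and work on the compact manifold-with-boundary $M_\epsilon=M\setminus N_\epsilon(\Sigma)$, whose boundary is the normal $\epsilon$-circle bundle of $\Sigma$. Using the local normal form for $g$ near $\Sigma$, one checks that the integrands of \eqref{GB} and \eqref{HST} are integrable on $M\setminus\Sigma$, so the bulk integrals over $M_\epsilon$ converge to $\chi_{orb}(M)$ and $\tau_{orb}(M)$. Applying the Chern--Gauss--Bonnet theorem with boundary, and the Atiyah--Patodi--Singer signature theorem (boundary transgression together with the $\eta$-invariant of $\partial M_\epsilon$), along with the elementary identities $\chi(M_\epsilon)=\chi(M)-\chi(\Sigma)$ and $\tau(M_\epsilon)=\tau(M)-\operatorname{sgn}([\Sigma]^2)$, the two defects are expressed as $\epsilon\to0$ limits of boundary contributions over a circle bundle whose fibres have length $\asymp 2\pi\beta\epsilon$. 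As this fibre collapses, only the leading (model) part of the geometry contributes in the limit; since $\Sigma$ is two-dimensional the only invariants that can survive are $\int_\Sigma K\,dA=2\pi\chi(\Sigma)$ and $\int_\Sigma e(\nu)=[\Sigma]^2$, which gives the universality, and a careful evaluation of the model contribution --- in which the relevant second fundamental form scales like $1/\epsilon$ while the fibre length scales like $\beta\epsilon$ --- shows that $c_\chi$ and $c_\tau$ are polynomial of the stated degrees.

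For the second part I would compute $c_\chi(1/k)$ and $c_\tau(1/k)$ for every positive integer $k$ via cyclic branched covers; since only one metric of each cone angle is needed, I may take it to be an orbifold-cone metric as in Definition \ref{def}. Then near $\Sigma$ the metric is the quotient of a smooth $\mathbb{Z}/k$-invariant metric, so on a $k$-fold cyclic cover $\pi\colon\widetilde M\to M$ branched along $\Sigma$ --- which exists for suitable $(M,\Sigma)$, e.g.\ $M=\mathbb{CP}^2$ with $\Sigma$ a smooth curve of degree $k$, in which case $\widetilde M$ is the degree-$k$ surface in $\mathbb{CP}^3$ --- the pulled-back metric $\pi^*g$ extends to a genuinely smooth metric on $\widetilde M$, with $\pi^{-1}(\Sigma)\cong\Sigma$. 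Since $\pi$ is $k$-to-one and a local isometry off the branch locus, the right-hand sides of \eqref{GB} and \eqref{HST} for $\pi^*g$ are $k$ times those for $g$, and because $\pi^*g$ is smooth they equal $\chi(\widetilde M)$ and $\tau(\widetilde M)$; combining this with the Riemann--Hurwitz formula $\chi(\widetilde M)=k\chi(M)-(k-1)\chi(\Sigma)$ and with the signature of a cyclic branched cover $\tau(\widetilde M)=k\tau(M)-\tfrac{k^2-1}{3k}[\Sigma]^2$ (equivalently, with the known characteristic numbers of the degree-$k$ surface) and dividing by $k$ gives $c_\chi(1/k)=1-1/k$ and $c_\tau(1/k)=\tfrac13(1-1/k^2)$. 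A polynomial of degree at most $1$ (respectively at most $2$) agreeing with $1-\beta$ (respectively $\tfrac13(1-\beta^2)$) at the infinitely many points $\beta=1/k$ must equal it identically, and \eqref{euler-signature} follows.

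The main obstacle lies in the local analysis near $\Sigma$ in the first part: verifying, with an arbitrary lower-order term in the normal form, that the curvature integrands are genuinely integrable and that the bulk integrals converge to exactly $\chi_{orb}(M)$ and $\tau_{orb}(M)$; and, more seriously, controlling the $\epsilon\to0$ behaviour of the boundary transgression terms and --- for the signature --- the adiabatic limit of the $\eta$-invariant of the collapsing normal circle bundle, which is the source of the quadratic dependence on $\beta$. One can instead bypass the branched-cover step and compute these boundary limits explicitly for a general cone angle, which is essentially the route of \cite{AtiyahLeBrun}; the branched-cover computation then serves as an independent consistency check.
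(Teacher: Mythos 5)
First, note that the paper does not prove this statement at all: it is quoted verbatim from Atiyah--LeBrun and used as input, so your proposal can only be measured against the argument in \cite{AtiyahLeBrun} itself. The second half of your plan is fine: pushing down a $\ZZ/k$-invariant smooth metric from the degree-$k$ surface in $\CP^3$ (equivalently, pulling back an orbifold-cone metric to the cyclic branched cover), combining the smooth Gauss--Bonnet and signature theorems upstairs with Riemann--Hurwitz and Hirzebruch's formula for the signature of a cyclic branched cover, correctly yields $c_\chi(1/k)=1-1/k$ and $c_\tau(1/k)=\tfrac13(1-1/k^2)$, and polynomial interpolation at infinitely many points is unimpeachable.

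The genuine gap is the first half, which is precisely the analytic content of the theorem. That the two defects localize on $\Sigma$ in the universal form $c_\chi(\beta)\chi(\Sigma)$ and $c_\tau(\beta)[\Sigma]^2$, for an \emph{arbitrary} edge-cone metric (with its unconstrained lower-order term $r^2h$), and that $c_\chi,c_\tau$ are polynomials in $\beta$ of degree $\le 1$ and $\le 2$, is asserted rather than proved: ``only the model part contributes'' and ``a careful evaluation shows the stated degrees'' is exactly the hard step, requiring uniform control of the Chern--Gauss--Bonnet transgression and, for the signature, of the adiabatic limit of the APS $\eta$-invariant of the collapsing normal circle bundle. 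Two further points. (i) Once you have actually carried out that model boundary computation you already know $c_\chi(\beta)$ and $c_\tau(\beta)$ explicitly, so the interpolation-at-$\beta=1/k$ step adds nothing; this is why Atiyah and LeBrun's proof proceeds by doing the explicit computation for arbitrary $\beta$ (the orbifold angles $2\pi/p$ alone follow from equivariant/orbifold index theory, which is the logically weaker statement the present paper exploits via Kawasaki). (ii) Even granting localization, a priori each defect could be a mixed combination $a(\beta)\chi(\Sigma)+b(\beta)[\Sigma]^2$; your single family of examples ($\CP^2$ with degree-$k$ curves) gives one linear relation per $k$ and cannot separate two unknown coefficients, so you would need either a second family with independent values of $(\chi(\Sigma),[\Sigma]^2)$ at each angle, or a genuine argument excluding the cross terms --- neither of which appears in the proposal.
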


As an application, Atiyah-LeBrun proved a version of the 
Hitchin-Thorpe inequality for Einstein edge-cone metrics. 
They also discussed many examples of Einstein edge-cone 
metrics. Interestingly, all of the examples considered in that paper 
also happened to be self-dual or anti-self-dual metrics, 
which we now very briefly describe, and we refer the reader 
to \cite{LockViaclovsky} for more background and details.  

It is well-known that on an oriented four-manifold $M$, the Weyl tensor 
decomposes as $W = W^+ + W^-$, where $W^+$ and $W^-$ are the self-dual 
and anti-self-dual parts of the Weyl tensor, respectively. 
A metric $g$ is said to be anti-self-dual or self-dual if $W^+ \equiv 0$
or $W^- \equiv0$, respectively. In the anti-self-dual case, 
local information of the moduli space of anti-self-dual metrics near $g$ 
is contained in the elliptic complex

\begin{align}
\label{thecomplex}
\Gamma(T^*M) \overset{\mathcal{K}_g}{\longrightarrow} 
\Gamma(S^2_0(T^*M))  \overset{\mathcal{D}^+}{\longrightarrow}
\Gamma(S^2_0(\Lambda^2_+)),
\end{align}
where $\mathcal{K}_g$ is the conformal Killing operator, 
$S^2_0(T^*M)$ denotes traceless symmetric tensors, 
and $\mathcal{\mathcal{D}^+} = (\mathcal{W}^+)_g'$ is the linearized self-dual Weyl curvature 
operator.
In the self-dual case, the relevant complex is 
\begin{align}
\label{thecomplex2}
\Gamma(T^*M) \overset{\mathcal{K}_g}{\longrightarrow} 
\Gamma(S^2_0(T^*M))  \overset{\mathcal{D}^-}{\longrightarrow}
\Gamma(S^2_0(\Lambda^2_-)),
\end{align}
where  $\mathcal{D^-} = (\mathcal{W}^-)_g'$ is the linearized anti-self-dual 
Weyl curvature operator.

If $g$ is a smooth anti-self-dual or self-dual
Riemannian metric, from the Atiyah-Singer Index Theorem, 
the index of the complex \eqref{thecomplex} or \eqref{thecomplex2} 
is given by 
\begin{align}
\label{manifoldindex}
Ind(M,g) 
= \dim( H^0) -  \dim( H^1) + \dim( H^2) = \frac{1}{2} ( 15 \chi(M) \pm 29 \tau(M)), 
\end{align}
where $\chi(M)$ is the Euler characteristic,
$\tau(M)$ is the signature of $M$, and
 $H^i$ is the $i$th cohomology of the complex \eqref{thecomplex}
in the positive case, and the complex \eqref{thecomplex2} in the 
negative case,  for $i = 0,1,2$; see \cite{KotschickKing}.

For an orbifold-cone metric, the index is computed by looking
at smooth sections in the orbifold sense, see Section \ref{tgi}.  
In this setting, the formula given in  \eqref{manifoldindex}
is not necessarily correct, and there are correction terms required
arising from the singularities.

We note that the complex \eqref{thecomplex} yields local information about the
structure of the moduli space of anti-self-dual metrics near $g$. 
That is, there is a map, called the {\em{Kuranishi map}}
\begin{align}
\Psi : H^1 \rightarrow H^2
\end{align}
which is equivariant under the action of $H^0$, such that the 
moduli space of anti-self-dual orbifold-cone metrics with singular set 
$\Sigma \subset M$ and fixed cone angle $2 \pi /p$ near $g$, 
$\mathcal{M}_g$, is locally isomorphic to 
$\Psi^{-1}(0) / H^0$. This is a standard fact in the setting of 
smooth manifolds, and the proof of existence of the 
Kuranishi map readily  generalizes to the setting of orbifold-cone metrics. 
It is important to note that this map does not take into account deformations 
of the cone angle. 

In a previous paper, the authors proved an extension of the index formula 
to anti-self-dual orbifold metrics with isolated cyclic quotient 
singularities \cite{LockViaclovsky}. In this paper, we prove an 
extension of the index formula \eqref{manifoldindex} 
to anti-self-dual metrics with orbifold-cone singularities:
\begin{theorem}
\label{mainthm}
Let $g$ be an orbifold-cone metric
on a smooth four-manifold $M$ with singular set
$\Sigma \subset M$ and cone angle $2 \pi/p$. 
If $g$ is anti-self-dual, then 
the index of the complex \eqref{thecomplex} is given 
by 
\begin{align}
\label{Ind}
Ind^{ASD}(M, g) =\frac{1}{2}(15\chi(M)+29\tau(M))-4\chi(\Sigma)-4[\Sigma]^2,
\end{align}
where $[\Sigma]^2$ denotes the self-intersection number of $\Sigma$ in $M$. 
If $g$ is instead self-dual, then the index of the complex \eqref{thecomplex2} is given by
\begin{align}
\label{Ind2}
Ind^{SD}(M, g) =\frac{1}{2}(15\chi(M)-29\tau(M))-4\chi(\Sigma)+4[\Sigma]^2.
\end{align}
\end{theorem}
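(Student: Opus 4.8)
The plan is to compute the index via an equivariant/orbifold version of the Atiyah-Singer Index Theorem, expressed as a sum of a "smooth" integral contribution and a local contribution supported on the singular set $\Sigma$. Since the complex \eqref{thecomplex} is elliptic and its symbol is the same as in the smooth case, the analytic index equals the orbifold index, and the latter splits as
\begin{align*}
Ind^{ASD}(M,g) = \int_M \mathcal{I}_{AS}(g)\, dV_g + \mathcal{C}(\Sigma),
\end{align*}
where $\mathcal{I}_{AS}$ is the usual Atiyah-Singer integrand for the ASD deformation complex and $\mathcal{C}(\Sigma)$ is a correction term localized near $\Sigma$. The first step is to recognize that the integrand $\int_M \mathcal{I}_{AS}(g)\,dV_g$ is, by the Atiyah-LeBrun curvature identities, expressible through $\chi_{orb}(M)$ and $\tau_{orb}(M)$ exactly as in \eqref{manifoldindex}, so that $\int_M \mathcal{I}_{AS}(g)\,dV_g = \frac{1}{2}(15\chi_{orb}(M) + 29\tau_{orb}(M))$. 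Substituting the Atiyah-LeBrun formulas \eqref{euler-signature} with $\beta = 1/p$ then rewrites this in terms of $\chi(M), \tau(M), \chi(\Sigma), [\Sigma]^2$ and the cone angle parameter $1/p$; the $p$-dependent pieces must eventually cancel against the singular correction.

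The heart of the argument is computing $\mathcal{C}(\Sigma)$. The key observation is that near $\Sigma$, the metric is locally modeled on $\RR^4/\Gamma$ with $\Gamma = \ZZ_p$ acting by $(z_1,z_2)\mapsto(z_1, e^{2\pi i/p}z_2)$, which acts trivially on the $z_1$-directions and by rotation on the $z_2$-plane. Thus $\Sigma$ plays the role of the fixed-point set of this local $\ZZ_p$-action, and the correction term should be obtained by integrating over $\Sigma$ a local index density built from the $\Gamma$-equivariant data of the symbol of the ASD complex restricted to the normal bundle $N\Sigma$. I would compute this by the Atiyah-Bott-Segal-Singer Lefschetz fixed-point formula applied fiberwise: for each nontrivial $\gamma \in \ZZ_p$, one evaluates the equivariant index contribution of $\gamma$ acting on the three bundles $T^*M$, $S^2_0(T^*M)$, $S^2_0(\Lambda^2_+)$ along $\Sigma$, then sums over $\gamma$ and over the group to extract the $\Gamma$-invariant part. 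This reduces, after the representation-theoretic bookkeeping on $\U(2)$-weights, to a finite trigonometric sum in $p$; one then evaluates that sum in closed form. I expect it to collapse — after using standard cotangent-sum identities — to an expression of the form $a(p)\chi(\Sigma) + b(p)[\Sigma]^2$ where $a(p), b(p)$ are rational functions of $p$ whose leading ($p$-independent) parts combine with the $p$-dependent terms from the Atiyah-LeBrun substitution to leave exactly $-4\chi(\Sigma) - 4[\Sigma]^2$.

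The main obstacle will be the fixed-point-formula computation: organizing the Lefschetz contributions of the ASD deformation complex correctly — in particular tracking how $S^2_0(\Lambda^2_+)$ decomposes under the local $\U(2)$-action, keeping orientation conventions consistent between the self-dual and anti-self-dual cases, and handling the non-orientable-$\Sigma$ case via the orientation double cover as indicated in the excerpt — and then summing the resulting trigonometric series to a closed form and verifying that all $p$-dependence cancels. I would cross-check the final formula in two ways: first, in the limit $p=1$ (smooth metric, $\Sigma$ irrelevant) it must reduce to \eqref{manifoldindex}, which it does since $\beta = 1$ kills the correction terms in \eqref{euler-signature}; second, by comparing with known examples of ASD orbifold-cone metrics and with the isolated-singularity index formula from \cite{LockViaclovsky} in degenerate configurations where $\Sigma$ shrinks.
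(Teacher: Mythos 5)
Your proposal follows essentially the same route as the paper: the index is computed from Kawasaki's orbifold index theorem, whose smooth contribution is $\tfrac{1}{2}(15\chi_{orb}(M)+29\tau_{orb}(M))$ rewritten via the Atiyah--LeBrun formulas with $\beta=1/p$, while the correction along $\Sigma$ is an equivariant (Lefschetz-type) sum over the nontrivial elements of $\ZZ_p$ of Chern characters of the restricted symbol against the Thom class of the normal bundle and $\hat{A}(\Sigma)^2$, which collapses to a combination of $\chi(\Sigma)$ and $[\Sigma]^2$ with all $p$-dependence cancelling, and the non-orientable case is handled by the orientation double cover. The only caveat is that your plan leaves the central bundle-decomposition and trigonometric-sum computation (the content of Sections 2--4 of the paper, including the factor $[\Sigma]^2=p[\hat{\Sigma}]^2$ relating the orbifold and ordinary normal Euler classes) as an expectation rather than carrying it out.
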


We emphasize that the index is independent of the cone angle and only depends on the 
topologies of $M$ and $\Sigma$, and the embedding of $\Sigma$ into $M$.
Our proof of this is an application of Kawasaki's orbifold index 
theorem from \cite{Kawasaki}.

\begin{remark}
{\em There are many examples of continuous families of anti-self-dual edge-cone metrics arising from deforming the cone angle, 
see \cite{AtiyahLeBrun}.  In all such examples the above index formula can be seen to hold for arbitrary real cone angles.  
Thus, it is likely that this formula holds in general for anti-self-dual edge-cone metrics with arbitrary cone angle $2\pi \beta$.  However, in this case the index must be defined using appropriate weighted edge H\"older spaces to obtain Fredholm 
operators.  This introduces considerable technical complications, and we plan to address this in a forthcoming paper.}
\end{remark}

It is useful to make the following definition.
\begin{definition}
{\em An anti-self-dual (self-dual) orbifold-cone metric with $H^2=\{0\}$ is called} unobstructed.
\end{definition}

It was conjectured by I.M. Singer in 1978 that a positive
scalar curvature anti-self-dual metric is unobstructed.
The evidence for this conjecture is very strong, but it has
not yet been proven in full generality. However, the conjecture
is certainly true in the Einstein case:

\begin{lemma}
\label{lemma}
Any anti-self-dual (self-dual) Einstein orbifold-cone metric with positive scalar curvature is unobstructed.
\end{lemma}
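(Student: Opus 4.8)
The statement to prove is that $H^2 = \{0\}$, where $H^2$ denotes the second cohomology of the complex \eqref{thecomplex} in the anti-self-dual case and of \eqref{thecomplex2} in the self-dual case; the two cases are interchanged by reversing the orientation and swapping $W^+$ with $W^-$, so it suffices to treat the anti-self-dual case. Since \eqref{thecomplex} is elliptic in the orbifold sense (the symbol is computed on the local $\Gamma$-covers, see Section \ref{tgi}) and $M$ is compact, the Hodge decomposition applies and gives $H^2 \cong \ker((\mathcal{D}^+)^*)$, where $(\mathcal{D}^+)^*\colon \Gamma(S^2_0(\Lambda^2_+)) \to \Gamma(S^2_0(T^*M))$ is the formal adjoint of $\mathcal{D}^+$ with respect to the $L^2$ inner product of $(M,g)$; equivalently $H^2 \cong \ker \Delta_2$ for the order-four Laplacian $\Delta_2 = \mathcal{D}^+(\mathcal{D}^+)^*$ acting on $S^2_0(\Lambda^2_+)$. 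Thus I must show that, for $g$ Einstein with $R > 0$ and anti-self-dual, every $\phi \in \Gamma(S^2_0(\Lambda^2_+))$ with $(\mathcal{D}^+)^*\phi = 0$ vanishes identically.

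The mechanism is a Bochner--Weitzenböck vanishing argument, carried out exactly as in the smooth compact case. For an Einstein metric the relevant background curvature is built only from $W^+$ and $R$ (the restriction of the curvature operator to $\Lambda^2_+$ being $W^+ + \tfrac{R}{12}\,\mathrm{Id}$), and there is a Weitzenböck formula, classical for smooth Einstein four-manifolds, showing that $\Delta_2\phi = 0$ forces $\phi$ to satisfy the second-order equation $\nabla^*\nabla \phi + \mathcal{R}(\phi) = 0$, where the curvature endomorphism $\mathcal{R}$ of $S^2_0(\Lambda^2_+)$ has the form $\mathcal{R} = c\, R\,\mathrm{Id} - \mathfrak{W}^+$ with $c > 0$ and $\mathfrak{W}^+$ the natural action of $W^+$ on $S^2_0(\Lambda^2_+)$. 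If $g$ is anti-self-dual then $W^+ \equiv 0$, hence $\mathfrak{W}^+ = 0$ and $\mathcal{R} = c\,R\,\mathrm{Id}$. Pairing the equation pointwise with $\phi$ and integrating over $M$ yields
\[\int_M |\nabla\phi|^2\, dV_g + c\int_M R\,|\phi|^2\, dV_g = 0,\]
and since $R > 0$ both integrands are nonnegative, forcing $\phi \equiv 0$. Therefore $H^2 = \{0\}$ and $g$ is unobstructed; the self-dual case is identical with $W^-$ in place of $W^+$.

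What requires justification beyond the smooth case is that $g$ is singular along $\Sigma$, so that both the pointwise Weitzenböck identity and the integration by parts above must be checked there. This is where Definition \ref{def} enters: near any point of $\Sigma$ the metric $g$ is the quotient by the finite cyclic group $\Gamma$ of a smooth $\Gamma$-invariant metric on a ball $\tilde U \subset \mathbb{R}^4$, and an orbifold-smooth section of the natural bundle $S^2_0(\Lambda^2_+)$ over such a neighborhood corresponds to a smooth $\Gamma$-invariant section over $\tilde U$. Hence the Weitzenböck formula, being local and pointwise, holds on $M\setminus\Sigma$ and propagates across $\Sigma$ by this lifting, and Green's identity $\int \langle \nabla^*\nabla\phi,\phi\rangle\, dV_g = \int |\nabla\phi|^2\, dV_g$ picks up no boundary term at $\Sigma$ because it can be verified upstairs on $\tilde U$, where the metric is smooth, with the quotient integral equal to $|\Gamma|^{-1}$ times the one on $\tilde U$; compactness of $M$ disposes of the remaining global integration by parts. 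I expect the main obstacle to be pinning down the precise Weitzenböck/factorization formula for $\Delta_2$ on an Einstein background --- in particular confirming that the scalar-curvature coefficient $c$ is strictly positive and that the only other curvature contribution is the $W^+$ term --- so that the vanishing is an honest consequence of $W^+\equiv 0$ and $R>0$; the orbifold-cone bookkeeping is then routine.
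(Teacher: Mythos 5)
Your overall strategy---a Weitzenb\"ock vanishing argument on $\ker(\mathcal{D}^{+^*})$ using the Einstein condition, $W^+\equiv 0$ and $R>0$, with the integration by parts across $\Sigma$ justified by orbifold-smoothness of the section---is the same as the paper's, which follows Itoh \cite{Itoh}. The problem is that the step your argument pivots on is neither established nor correctly stated. The operator $\mathcal{D}^+\mathcal{D}^{+^*}$ is fourth order, so there is no pointwise Weitzenb\"ock identity of the form $\nabla^*\nabla+\mathcal{R}$ for it, and $\mathcal{D}^+\mathcal{D}^{+^*}\phi=0$ does not by itself ``force'' the second-order equation $\nabla^*\nabla\phi+cR\,\phi=0$. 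What Itoh's formula actually provides on an anti-self-dual Einstein background is the factorization
\begin{align*}
\mathcal{D}^+\mathcal{D}^{+^*}=\frac{1}{24}\bigl(3\nabla^*\nabla+2R\bigr)\bigl(2\nabla^*\nabla+R\bigr),
\end{align*}
and to pass from $\mathcal{D}^+\mathcal{D}^{+^*}\phi=0$ to $(2\nabla^*\nabla+R)\phi=0$ you must first show that the factor $3\nabla^*\nabla+2R$ is injective, which is itself a global integration-by-parts statement on the orbifold (applied to $Y=(2\nabla^*\nabla+R)\phi$, hence involving higher derivatives of $\phi$ near $\Sigma$). Alternatively one skips the reduction entirely, as the paper does, and integrates $\langle\mathcal{D}^+\mathcal{D}^{+^*}\phi,\phi\rangle$ directly, obtaining the manifestly nonnegative integrand $\tfrac14|\Delta\phi|^2+\tfrac{7}{24}R|\nabla\phi|^2+\tfrac1{12}R^2|\phi|^2$. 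Since you explicitly defer ``pinning down the precise Weitzenb\"ock/factorization formula,'' the core analytic input of the proof---the positivity of the coefficients and the fact that the only other curvature contribution is $W^+$---is exactly what must be proved or cited, not assumed; as written this is a genuine gap rather than a routine verification.

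On the orbifold bookkeeping: your instinct is right, but ``Green's identity can be verified upstairs on $\tilde U$'' is not by itself an argument, because the identity is global; you need either a partition of unity subordinate to orbifold and manifold charts (with orbifold-smooth cutoffs, lifting and dividing by $|\Gamma|$ on each singular chart), or the paper's explicit excision: remove an $\epsilon$-tube $N_\epsilon$ around $\Sigma$, use that an orbifold-smooth section and its derivatives are bounded together with $\mathrm{Area}(\partial(M\setminus N_\epsilon))<C\epsilon$, and let $\epsilon\to 0$ to kill the boundary terms (which, for the fourth-order identity, involve terms such as $\langle\nabla_n\Delta\phi,\phi\rangle$, so boundedness of higher derivatives is genuinely used). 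This part of your sketch is repairable along the lines you indicate; the missing Weitzenb\"ock/factorization input is the real issue.
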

This was proved in the smooth case by \cite{Itoh}, and in Section \ref{coro}
we will show that his proof extends to the orbifold-cone setting.

\subsection{Self-dual edge-cone metrics on $S^4$}
The first examples we consider were found by Hitchin in \cite{Hitchin1993}.
They are a family of self-dual Einstein orbifold-cone metrics on $S^4$ with singular set 
an $\RP^2$ and cone angle $2 \pi / (k -2)$, where $k \geq 3$ is an integer.  These metrics have the $3$-dimesional 
isometry group $\rm{SO}(3)$.  These metrics are self-dual, which determines an orientation on $S^4$.  Everything 
we say below is with respect to this orientation.

The singular set is a Veronese $\RP^2\subset S^4$.  This arises by first looking at the representation of $S^2_0(\mathbb{R}^3)$ 
in $\mathbb{R}^5$, which yields an embedding 
$\mathbb{RP}^2\hookrightarrow \mathbb{RP}^4$, having two lifts into $S^4$ of self-intersection
$\pm2$  respectively.  The singular set for Hitchin's metrics is 
the $\RP^2$ with self-intersection $-2$.   We have the following rigidity result for Hitchin's metrics:
\begin{corollary}
\label{c1}
For any $k \geq 3$, a Hitchin metric on $(S^4, \RP^2)$ is rigid as a 
self-dual orbifold-cone metric with cone angle $2 \pi / (k -2)$.
\end{corollary}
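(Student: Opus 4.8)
The plan is to show that the anti-self-dual deformation complex for a Hitchin metric on $(S^4,\RP^2)$ — here the complex \eqref{thecomplex2}, since the metric is self-dual — has vanishing cohomology in degrees one and two, so that the local moduli space $\Psi^{-1}(0)/H^0$ is a single point (up to the isometric action), which is precisely the statement of rigidity. The input is Theorem \ref{mainthm} for the numerical constraint and Lemma \ref{lemma} for unobstructedness.

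First I would assemble the topological data. For $M = S^4$ we have $\chi(S^4) = 2$ and $\tau(S^4) = 0$. For the singular set $\Sigma$, a Veronese $\RP^2 \subset S^4$, we have $\chi(\RP^2) = 1$; the relevant self-intersection is $[\Sigma]^2 = -2$, as stated in the excerpt (the lift with self-intersection $-2$ is the one Hitchin uses). Substituting into the self-dual index formula \eqref{Ind2}:
\begin{align*}
Ind^{SD}(S^4, g) = \frac{1}{2}(15 \cdot 2 - 29 \cdot 0) - 4 \cdot 1 + 4 \cdot (-2) = 15 - 4 - 8 = 3.
\end{align*}

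Next I would pin down the three cohomology groups individually. Since Hitchin's metrics are Einstein with positive scalar curvature — Hitchin constructs them as self-dual Einstein metrics with positive Einstein constant — Lemma \ref{lemma} applies directly and gives $H^2 = \{0\}$, i.e.\ the metric is unobstructed. For $H^0$: this is the space of conformal Killing fields, and since the metric has isometry group $\rm{SO}(3)$, which is three-dimensional, $\dim H^0 \geq 3$; one must check there are no additional conformal (non-Killing) vector fields, which follows because a positive scalar curvature Einstein metric that is not the round sphere has $H^0$ equal exactly to its Killing algebra (the conformal-to-Killing reduction uses the Obata-type/Bochner argument on the orbifold, or one simply notes that a conformal Killing field on a compact Einstein manifold with $R > 0$ is Killing unless $g$ is round — and the round $S^4$ has no $\RP^2$ singular set). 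So $\dim H^0 = 3$. Then from $Ind^{SD} = \dim H^0 - \dim H^1 + \dim H^2 = 3$ we get $3 - \dim H^1 + 0 = 3$, hence $\dim H^1 = 0$. With $H^1 = \{0\}$ the Kuranishi map $\Psi: H^1 \to H^2$ is trivially the zero map on a point, so $\mathcal{M}_g$ is locally just $\{pt\}/H^0$, which is the statement that the metric is rigid (within the class of self-dual orbifold-cone metrics with the fixed cone angle $2\pi/(k-2)$ and the fixed singular locus).

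The main obstacle I anticipate is the $H^0$ computation: one must rule out the existence of conformal Killing fields beyond the $\mathfrak{so}(3)$ isometries, and do so in the orbifold-cone category, where the usual smooth-manifold arguments (Obata's theorem, or the Bochner formula identifying conformal Killing fields with Killing fields on positive-scalar-curvature Einstein manifolds) need to be checked to go through in the presence of the cone singularity along $\Sigma$ — in particular one needs that conformal Killing fields in the orbifold sense are tangent to $\Sigma$ and extend across it, and that no integration-by-parts boundary contributions appear at the cone. Once $\dim H^0 = 3$ is secured, the rest is the arithmetic above combined with Theorem \ref{mainthm} and Lemma \ref{lemma}. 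A secondary point worth stating carefully is that "rigid" here means rigid as a self-dual orbifold-cone metric \emph{with cone angle fixed}; deformations of the cone angle are not detected by the complex \eqref{thecomplex2}, as emphasized after Theorem \ref{mainthm}, and indeed the family $\{k \geq 3\}$ shows the metrics do vary with the (discrete, in this case) cone-angle parameter.
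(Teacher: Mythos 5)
For $k \ge 4$ your argument is exactly the paper's: the index is $3$ from \eqref{Ind2} with $\chi(\RP^2)=1$ and $[\Sigma]^2=-2$, $H^2=\{0\}$ comes from Lemma \ref{lemma} since the metrics are self-dual Einstein with positive scalar curvature, $\dim H^0=3$, and then $\dim H^1=0$ gives rigidity via the Kuranishi map. Your extra care about $H^0$ (ruling out non-Killing conformal fields on a compact Einstein manifold with $R>0$ unless the metric is round, checked in the orbifold-cone setting) is a point the paper passes over silently, and it is genuinely needed to upgrade $\dim H^0\ge 3$ to $\dim H^0=3$; that part of your write-up is a welcome addition.

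There is, however, a gap in your uniform treatment at $k=3$. For $k=3$ the cone angle is $2\pi/(k-2)=2\pi$, i.e.\ $p=1$: there is no cone singularity and the Hitchin metric is the standard round metric on $S^4$ (this is precisely why the paper's proof disposes of $k=3$ separately as the classically rigid round metric). Two of your premises then fail: the isometry group is not $\mathrm{SO}(3)$ but $\mathrm{SO}(5)$, and $H^0$ is the full $15$-dimensional conformal algebra, so $\dim H^0=3$ is false there. Your parenthetical remark that the round $S^4$ has no $\RP^2$ singular set does not rescue the argument, since the Veronese $\RP^2$ does sit inside the round $S^4$, and at $k=3$ the Hitchin metric \emph{is} the round metric with that $\RP^2$ as a non-singular distinguished surface. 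Moreover, the correction terms $-4\chi(\Sigma)\pm 4[\Sigma]^2$ in Theorem \ref{mainthm} arise from a sum over the nontrivial elements of $\Gamma$, which is empty when $p=1$; the relevant index in that case is the smooth formula \eqref{manifoldindex}, namely $15$, not $3$. The conclusion still holds for $k=3$ (with $\dim H^0=15$, $H^2=\{0\}$, hence $H^1=\{0\}$), but your argument as written does not establish it: you need to split off the $k=3$ case, as the paper does, and run either the classical smooth rigidity argument or the smooth index count there.
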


Hitchin's metrics all have singular set an embedded $\RP^2$. A
natural question is whether there are self-dual Einstein orbifold-cone
metrics on $S^4$ with other singular sets.
Atiyah-LeBrun give the following family of examples on $(S^4,S^2)$ \cite[page~21]{AtiyahLeBrun}: 
$S^4 \setminus S^2$ is 
conformally isometric  to $\mathcal{H}^3 \times S^1$, with the 
product metric $h + d \theta^2$ where $\mathcal{H}^3$ is the $3$-dimensional 
hyperbolic upper half plane with hyperbolic metric $h$. 
The metric $h + \beta^2 d\theta^2$ is then a constant curvature
edge-cone metric with angle $2 \pi \beta$, with singular set $S^2$. 

\begin{corollary}
\label{orientable}
No unobstructed self-dual orbifold-cone metrics exist on $S^4$ with singular set diffeomorphic to a genus $j\geq1$ 
orientable surface.
\end{corollary}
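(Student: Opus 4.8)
The plan is to feed the topology of $(S^4,\Sigma)$ into the self-dual index formula \eqref{Ind2} and to play the resulting number against an a priori upper bound on $H^0$, the space of conformal Killing fields of $g$.

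First I would compute the index. Since $\chi(S^4)=2$ and $\tau(S^4)=0$, a closed orientable surface $\Sigma$ of genus $j$ has $\chi(\Sigma)=2-2j$, and $[\Sigma]^2=0$: indeed $H_2(S^4;\ZZ)=0$, so $\Sigma$ is null-homologous, and the normal bundle of a null-homologous embedded orientable surface has vanishing Euler number. Hence \eqref{Ind2} gives
\begin{align*}
Ind^{SD}(S^4,g)=\tfrac12\big(15\cdot 2-29\cdot 0\big)-4(2-2j)+4\cdot 0=7+8j\ \ge\ 15\qquad\text{for all }j\ge1.
\end{align*}
If $g$ is unobstructed then $H^2=\{0\}$, so $7+8j=\dim H^0-\dim H^1\le\dim H^0$; it remains to contradict this.

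Next I would show $\dim H^0\le 3$ when $\Sigma$ has genus $j\ge1$. For cone angle $2\pi/p$ with $p\ge 2$, a vector field smooth in the orbifold sense near a point of $\Sigma$ is, in the local model $\RR^4/\Gamma$ of Definition \ref{def}, the pushforward of a $\Gamma$-invariant smooth field on $\RR^4$; inspecting the $\Gamma$-action on the coordinates $(z_1,z_2)$ shows that its $z_2$- and $\bar z_2$-components vanish along $\{z_2=0\}$, so every element of $H^0$ is tangent to $\Sigma$. Its flow then preserves $\Sigma$ and acts conformally on the smooth Riemannian metric $g|_\Sigma=f_{ij}(x)\,dx^i\otimes dx^j$, so restriction defines a linear map $\rho$ from $H^0$ to the conformal algebra of $(\Sigma,g|_\Sigma)$, which for $j\ge1$ has dimension at most $2$ (it is $0$ for $j\ge2$ and is the $2$-dimensional translation algebra of the conformally-induced complex torus for $j=1$). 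An element of $\ker\rho$ is a conformal Killing field $X$ of $g$ vanishing along $\Sigma$; in the local model, at a point of $\Sigma$ one then has $X=0$ and $\nabla X$ equal to a rotation of the rank-two normal plane, so $\ker\rho$ maps linearly into the one-dimensional space of such rotations, and if this image is zero the $2$-jet of $X$ vanishes there — the only surviving "special conformal" part of the jet would be nonzero along $\Sigma$ — forcing $X\equiv0$. Hence $\dim\ker\rho\le1$ and $\dim H^0\le3$, contradicting $\dim H^0\ge 7+8j\ge 15$. (For $j\ge2$ one may instead simply quote the classical bound that the conformal algebra of a connected Riemannian four-orbifold has dimension at most $15<23$.)

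The hard part is making this bound on $\dim H^0$ fully rigorous in the orbifold-cone category: confirming that orbifold-smooth fields are genuinely tangent to $\Sigma$, that their restrictions are conformal Killing fields of $g|_\Sigma$, and that a conformal Killing field of $g$ vanishing to first order along $\Sigma$ vanishes identically — a unique-continuation statement which goes through the standard prolongation of the conformal Killing operator and is available here because $g$ is smooth in the orbifold sense away from a codimension-two set. The index computation and the vanishing of $[\Sigma]^2$ are routine by comparison.
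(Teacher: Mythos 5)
Your proposal is correct and its skeleton is the paper's: plug $\chi(S^4)=2$, $\tau(S^4)=0$, $\chi(\Sigma^j)=2-2j$, $[\Sigma^j]^2=0$ into \eqref{Ind2} to get $Ind^{SD}=7+8j$, use unobstructedness to conclude $\dim H^0\geq 7+8j\geq 15$, and contradict this with an upper bound on the space of conformal Killing fields obtained by restricting them to $\Sigma$. Where you diverge is in how that upper bound is produced. The paper simply invokes Proposition \ref{bound}, whose proof (following Bagaev--Zhukova) bounds the kernel of the restriction homomorphism $Conf(M,\Sigma)\to Conf(\Sigma)$ by $\dim \mathrm{O}(2)$ plus the dimension of the first prolongation of the conformal structure group, i.e.\ by $1+4=5$, giving $\dim H^0\leq 7$ for $T^2$ and $\leq 5$ for higher genus --- already incompatible with $7+8j$. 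You instead prove a sharper bound ($\dim H^0\leq 3$ for $j\geq 1$) by using the orbifold structure directly: $\Gamma$-invariance forces elements of $H^0$ to be tangent to $\Sigma$, their restrictions lie in the (at most $2$-dimensional) conformal algebra of $\Sigma$, and a conformal Killing field vanishing along $\Sigma$ is pinned down by its normal-rotation $1$-jet at a single point, since vanishing of that $1$-jet together with vanishing along the two-dimensional $\Sigma$ kills the residual special-conformal part of the $2$-jet, whence $X\equiv 0$ by the standard prolongation/unique-continuation argument (run in a local orbifold chart). That jet computation is only sketched, but it is sound and fillable, and it is genuinely sharper than the paper's kernel estimate; on the other hand it buys nothing here, since any bound below $15$ suffices, and the paper's route is shorter because the needed proposition is already available. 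Your side remark is also apt: the crude classical bound $\dim H^0\leq 15$ alone would dispose of $j\geq 2$ but not of $j=1$, where the index equals exactly $15$, so some version of the restriction-plus-kernel argument is really needed there.
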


The natural question is then whether there are unobstructed self-dual 
orbifold-cone metrics on $S^4$ with singular set a smoothly embedded surface 
diffeomorphic to $j\#\RP^2$ when $j>1$, which we denote here by $\Sigma^j$.

For $\Sigma^j$ embedded in $S^4$,
Whitney studied the possible values of the self-intersection number  
and proved that $[\Sigma^j]^2\equiv 2\chi(\Sigma^j)\text{ mod } 4$. 
He also proposed a conjecture, which Massey later proved \cite{Massey}, 
stating that $[\Sigma^j]^2$ could only take the following values:
\begin{align}
\label{values}
2\chi(\Sigma^j)-4,\phantom{=} 2\chi(\Sigma^j),\phantom{=} 2\chi(\Sigma^j)+4, \cdots, \phantom{=}4-2\chi(\Sigma^j).
\end{align}
Since $\chi(\Sigma^j)=2-j$, this set of values can be written in terms of $j$ as:
\begin{align}
-2j,\phantom{ }-2j+4, \phantom{ }-2j+8, \cdots, \phantom{ }2j.
\end{align}
Moreover, Massey also proved that any of these values can be obtained
 by an appropriate embedding of $\Sigma^j$ in $S^4$.  The next result gives a restriction on the self-intersection number of the singular set for such a metric:
\begin{corollary}
\label{c2}
If $g$ is an unobstructed self-dual orbifold-cone 
metric on $(S^4, \Sigma^j)$, where $\Sigma^j$ is diffeomorphic to $j\#\RP^2$ when $j\geq 1$,
then we have the inequalities
\begin{align}
-2j\leq[\Sigma^j]^2<-j.
\end{align}
\end{corollary}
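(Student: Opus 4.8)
The lower bound $-2j\leq[\Sigma^j]^2$ is immediate from Massey's theorem quoted above, so the real content is the upper bound, and this will come from the index formula \eqref{Ind2}. Since $\chi(S^4)=2$, $\tau(S^4)=0$ and $\chi(\Sigma^j)=2-j$, substituting $M=S^4$ and $\Sigma=\Sigma^j$ into \eqref{Ind2} gives
\begin{align*}
Ind^{SD}(S^4,g)=\tfrac{1}{2}(15\cdot 2-29\cdot 0)-4(2-j)+4[\Sigma^j]^2=7+4j+4[\Sigma^j]^2 .
\end{align*}
On the other hand $Ind^{SD}(S^4,g)=\dim H^0-\dim H^1+\dim H^2$, and by hypothesis $g$ is unobstructed, so $H^2=\{0\}$ and hence $Ind^{SD}(S^4,g)\leq\dim H^0$. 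Therefore it suffices to prove $\dim H^0\leq 3$, since then $7+4j+4[\Sigma^j]^2\leq 3$, i.e.\ $[\Sigma^j]^2\leq -j-1<-j$, which together with the Massey bound gives the claim.

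To bound $\dim H^0=\dim\ker\mathcal{K}_g$ we examine the conformal Killing fields of $g$. First, an orbifold-smooth vector field on $(S^4,\Sigma^j)$ is tangent to $\Sigma^j$ along $\Sigma^j$: in the local model $\mathbb{R}^4/\Gamma$ a $\Gamma$-invariant vector field must take values in the fixed subspace of the isotropy representation at a fixed point, which is exactly $T\Sigma^j$. Hence a conformal Killing field $X$ of $g$ preserves $\Sigma^j$, and restricting the identity $\mathcal{L}_X g=\varphi g$ to $\Sigma^j$ shows $X|_{\Sigma^j}$ is a conformal Killing field of the induced conformal structure on $\Sigma^j$. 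The conformal automorphism algebra of any surface diffeomorphic to $j\#\RP^2$ has dimension at most $3$: it is $\mathfrak{so}(3)$ for $j=1$, at most one-dimensional for $j=2$ (the orientation cover is $T^2$ and only a one-parameter group of translations commutes with the deck involution), and trivial for $j\geq 3$ because then $\chi(\Sigma^j)<0$. Thus the restriction map $X\mapsto X|_{\Sigma^j}$ has image of dimension at most $3$, and it remains to show this map is injective.

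So suppose $X\in\ker\mathcal{K}_g$ with $X|_{\Sigma^j}=0$; we claim $X\equiv 0$. At $q\in\Sigma^j$ we have $X(q)=0$, and since $X$ vanishes along $\Sigma^j$ the endomorphism $\nabla X(q)$ annihilates $T_q\Sigma^j$; as $\nabla X(q)\in\mathfrak{co}(T_qS^4)$ (the conformal Killing equation kills its symmetric traceless part), a short computation forces the conformal-weight part to vanish and shows $\nabla X(q)$ restricts to a skew endomorphism of the normal plane, i.e.\ $\nabla X(q)\in\mathfrak{so}(N_q\Sigma^j)$. As $q$ varies, $q\mapsto\nabla X(q)$ is a smooth section of the real line bundle $\mathfrak{so}(N\Sigma^j)\cong\det N\Sigma^j$. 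Since $TS^4$ is orientable and $\Sigma^j$ is not, $w_1(N\Sigma^j)=w_1(T\Sigma^j)\neq 0$, so this line bundle is nontrivial and the section must vanish at some $q_0\in\Sigma^j$. Then $X$ vanishes to first order at $q_0$; combined with $X|_{\Sigma^j}=0$ this forces the full $2$-jet of $X$ at $q_0$ to vanish (in the flat model one checks directly that a conformal Killing field vanishing to first order along a $2$-plane is identically zero, and the prolongation of the conformal Killing equation transfers this to the curved case). Since a conformal Killing field on a connected manifold is determined by its $2$-jet at a point, and $S^4\setminus\Sigma^j$ is connected, $X\equiv 0$, proving injectivity and completing the argument.

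The main obstacle is the injectivity step in the last paragraph, namely that a conformal Killing field of an orbifold-cone metric vanishing on the singular set must vanish identically. This requires handling the orbifold regularity near $\Sigma^j$ and a unique-continuation/$2$-jet argument for the conformal Killing equation in the curved setting; the decisive input is the nontriviality of $\det N\Sigma^j$, which is precisely where the non-orientability of $\Sigma^j$ enters and which lets us locate a point where the field vanishes to first order.
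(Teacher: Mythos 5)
Your argument is correct, but it takes a genuinely different route at the decisive step, so it is worth comparing with the paper's. The paper derives the same identity $Ind^{SD}(S^4,g)=7+4j+4[\Sigma^j]^2$ and the same inequality $Ind^{SD}\leq\dim H^0$, but then bounds $\dim H^0$ by Proposition \ref{bound}, whose soft estimate $\dim H^0\leq\dim(Conf(\Sigma^j))+5$ only gives $8,7,5$ for $j=1$, $j=2$, $j\geq3$; the resulting bounds $[\Sigma^j]^2\leq-\tfrac34,\ -2,\ -\tfrac12-j$ must then be intersected with Massey's list \eqref{values} (all values congruent to $2\chi(\Sigma^j)$ mod $4$) to obtain the strict inequality for $j=1,2$. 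You instead prove the restriction map $H^0\to\mathfrak{conf}(\Sigma^j)$ is injective, so $\dim H^0\leq 3$ (indeed $\leq 3,1,0$; your Klein-bottle count of $1$ is in fact the accurate value, versus the $2$ stated in the paper, though either suffices as an upper bound), and the strict inequality then falls out arithmetically, with Massey needed only for the lower bound $-2j\leq[\Sigma^j]^2$. What your route buys is a sharper bound on $H^0$ and a conceptual role for non-orientability beyond Massey (it forces a zero of the section of $\mathfrak{so}(N\Sigma^j)\cong\det N\Sigma^j$); what the paper's route buys is that it needs only the crude semidirect-product/prolongation estimate and no unique-continuation analysis along the singular stratum. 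The one step you assert rather than prove -- that $X|_{\Sigma^j}=0$ together with $\nabla X(q_0)=0$ kills the full prolongation data at $q_0$ -- does go through, but deserves the computation: the second-order identity for conformal Killing fields expresses $\nabla^2 X$ as curvature terms contracted with $X$ plus terms of the form $g\otimes d\lambda$ with $\lambda=\tfrac14\mathrm{div}\,X$; at $q_0$ the curvature terms vanish since $X(q_0)=0$, the tangential-tangential second derivatives of $X$ vanish because $\nabla_w X\equiv 0$ along $\Sigma^j$ for tangential $w$ and $\nabla X(q_0)=0$, and $\lambda\equiv 0$ on $\Sigma^j$ kills the tangential part of $d\lambda$, so the identity forces the normal derivatives of $\lambda$ to vanish as well; then the parallel prolongation tuple vanishes at the lift of $q_0$ in the uniformizing chart, hence on that chart, hence on the connected set $S^4\setminus\Sigma^j$ and so everywhere. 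With that detail supplied, your proof is complete.
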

It is an interesting problem to find examples of unobstructed self-dual orbifold-cone metrics 
on $(S^4,\Sigma^j)$ when $j > 1$.

\subsection{LeBrun's hyperbolic monopole metrics}

Next, we turn to LeBrun's hyperbolic monopole metrics from \cite{LeBrunJDG}.
These metrics are defined similarly to the Gibbon-Hawking 
multi-Eguchi-Hanson metrics \cite{GibbonsHawking}, 
but with hyperbolic $3$-space $\mathcal{H}^3$ replacing Euclidean $3$-space. 
To define these metrics, first choose $n$ points $\{p_i\}$ in 
hyperbolic $3$-space, and let
\begin{align}
V = 1 + \sum_{i=1}^n \Gamma_{p_i}
\end{align}
where $\Gamma_{p_i}$ is the hyperbolic Green's function based at $p_i$ 
with normalization $\Delta \Gamma_{p_i} =  -2 \pi \delta_{p_j}$. 
Letting $P$ denote the collection of monopole points $p_i$, 
$* dV$ is a closed $2$-form on $\H^3 \setminus P$,
and $(1/ 2 \pi)[* dV]$ is an integral class in $
H^2 ( \H^3 \setminus P, \ZZ )$.
Let $\pi: X_0 \rightarrow \H^3 \setminus P$ 
be the unique principal $\U(1)$-bundle determined by the 
the above integral class.
By Chern-Weil theory, there is a connection form $\w \in H^1(X_0, \i \RR)$
with curvature form $\i (* dV)$. LeBrun's metric is defined by 
\begin{align}
\label{LBmetric}
g_{\LB} =   V \cdot g_{\H^3} - V^{-1} \w \odot \w.
\end{align}
Next define a larger manifold $X$ by attaching points $\tilde{p}_j$ 
over each $p_j$, and by adding $\Sigma = S^2$ corresponding to
the boundary of hyperbolic space. By an appropriate 
choice of conformal factor, the metric extends smoothly 
to this compactification, which is diffeomorphic to $n \# \CP^2$. 
All of these conformal classes admit an $S^1$-action. 

In \cite[Section 5]{AtiyahLeBrun} it was noted that by 
replacing $V$ with 
\begin{align}
V = \beta^{-1} + \sum_{i=1}^n \Gamma_{p_i},
\end{align}
one obtains an self-dual edge-cone metric on $(n \# \CP^2, \Sigma)$,
where $\Sigma = S^2$, with cone angle $2 \pi \beta$. 

For $\beta = 1$ and $n \geq 3$, it is well-known that LeBrun's metrics 
admit non-$S^1$-equivariant deformations,
with the moduli space locally of dimension $7n - 15$ \cite{LeBrunJAMS}.
However, for orbifold-cone metrics, assuming these metrics are unobstructed, then somewhat surprisingly 
this is no longer true: 
\begin{corollary}
\label{c4}
Let $g$ be an unobstructed LeBrun self-dual orbifold-cone metric on $(n \# \CP^2, \Sigma)$ 
with cone angle $2 \pi /p$. 
Then any self-dual edge-cone deformation of $g$ with cone angle 
$2 \pi /p$ also admits an $S^1$-action. Thus the moduli 
space of such metrics near $g$ is of dimension $3 (n -2)$
for $n \geq 3$. 
\end{corollary}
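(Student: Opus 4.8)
\emph{Proof proposal.}
The plan is to compute the index of the self-dual deformation complex \eqref{thecomplex2} at $g$ via Theorem \ref{mainthm}, and then to match that index against the explicit $3(n-2)$-parameter family of $S^1$-invariant LeBrun metrics. For $M=n\#\CP^2$ one has $\chi(M)=n+2$ and, in the orientation for which $g$ is self-dual (the one with positive definite intersection form), $\tau(M)=n$; the singular set $\Sigma=S^2$ has $\chi(\Sigma)=2$. The one input that needs attention is $[\Sigma]^2$: each Green's function $\Gamma_{p_i}$ contributes flux $2\pi$, so the $\U(1)$-bundle $X_0$ restricted to a sphere in $X_0$ parallel to $\Sigma$ has Chern number $n$, and this is exactly the Euler number of the normal bundle of $\Sigma$ in $X$; hence $[\Sigma]^2=n$. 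Substituting into \eqref{Ind2},
\[
Ind^{SD}(M,g)=\tfrac12\big(15(n+2)-29n\big)-4\cdot 2+4n=-3n+7 .
\]

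Since $g$ is unobstructed, $H^2=\{0\}$, so $\dim H^1-\dim H^0=-Ind^{SD}(M,g)=3n-7$. For $n\ge 3$ a LeBrun metric has conformal isometry algebra exactly its defining circle action — the isometries of $\mathcal H^3$ permuting the $n$ monopole points are rigid once $n\ge 3$ — so $\dim H^0=1$ and $\dim H^1=3n-6=3(n-2)$. On the other hand, fixing the cone angle $2\pi/p$, the hyperbolic monopole construction yields a family of self-dual orbifold-cone metrics on $(n\#\CP^2,\Sigma)$ parametrized by the $n$ distinct monopole points in $\mathcal H^3$ modulo $\mathrm{Isom}(\mathcal H^3)$ and relabelling; for $n\ge 3$ this is smooth of dimension $3n-6$, all of its members are $S^1$-invariant, and distinct configurations give non-isometric structures. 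Its tangent space at $g$ therefore lies in the $S^1$-invariant subspace $H^1_{S^1}\subseteq H^1$, so $3(n-2)\le\dim H^1_{S^1}\le\dim H^1=3(n-2)$.

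It follows that $H^1=H^1_{S^1}$: every infinitesimal self-dual edge-cone deformation of $g$ with cone angle $2\pi/p$ is $S^1$-invariant. Because $H^2=\{0\}$ the Kuranishi map vanishes, so these deformations integrate, and $\mathcal M_g$ is, near $g$, smooth of dimension $\dim H^1=3(n-2)$ — no dimension is lost to the gauge group, precisely because $H^0$ now acts trivially on $H^1$. As $\mathcal M_g$ is then smooth of dimension $3(n-2)$ and contains the LeBrun family of the same dimension, it coincides with that family near $g$; in particular every nearby self-dual edge-cone metric with cone angle $2\pi/p$ is a LeBrun metric and hence carries an $S^1$-action.

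The delicate points are arithmetic rather than analytic: pinning down the sign so that $[\Sigma]^2=+n$ in the self-dual orientation — this is what keeps $Ind^{SD}(M,g)=-3n+7$ and hence makes the bound $\dim H^1=3(n-2)$ tight, rather than leaving room for non-invariant deformations — and verifying $\dim H^0=1$, i.e.\ that a LeBrun metric has no conformal symmetry beyond its circle action (at symmetric configurations one instead checks that the LeBrun family already exhausts $H^1$). The remainder is a direct application of Theorem \ref{mainthm} together with the standard Kuranishi description of the moduli space of unobstructed self-dual structures.
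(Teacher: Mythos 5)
Your proposal is correct and follows essentially the same route as the paper: compute $Ind^{SD}(n\#\CP^2,g)=-3n+7$ from Theorem \ref{mainthm} using $\chi(M)=n+2$, $\tau(M)=n$, $\chi(\Sigma)=2$, $[\Sigma]^2=n$, combine unobstructedness with $\dim H^0=1$ to get $\dim H^1=3n-6$, and then match this against the $3(n-2)$-dimensional $S^1$-invariant LeBrun family to force every nearby deformation to be $S^1$-equivariant. Your phrasing via $H^1=H^1_{S^1}$ is just a mild repackaging of the paper's observation that the $H^0$-action on $H^1$ must be trivial, and your extra care about the sign of $[\Sigma]^2$ and about symmetric monopole configurations only sharpens points the paper states without comment.
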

Unobstructedness is true in the smooth case ($\beta =1$),
but the proof of this relies on tools which do not easily
generalize to the orbifold-cone case. However, we do expect that
these metrics are also unobstructed for $\beta \neq 1$,
so this assumption is probably not necessary.

\subsection{Ricci-flat anti-self-dual metrics}

Finally, we consider the Ricci-flat case. There are many known example 
of such metrics with edge-cone singularities. For example, 
in \cite{Brendle} examples of K\"ahler Ricci-flat 
metrics with edge-cone singularities were obtained. 
These are of relevance to this paper, since such a metric in 
dimension $4$ is in necessarily anti-self-dual.  
The following result computes the dimension of the moduli space of 
anti-self-dual metrics in the more general Ricci-flat anti-self-dual case:
\begin{corollary}
\label{c5}
Let $g$ be an anti-self-dual Ricci-flat orbifold-cone metric on $M$ 
with singular set $\Sigma$ and cone angle $2 \pi /p$. 
Assume that there are no parallel 
vector fields, and also that there are no parallel 
sections of $S^2_0( \Lambda^2_+)$ with respect to $g$. 
Then the moduli space of anti-self-dual orbifold-cone metrics 
on $M$ with singular set $\Sigma$ and cone angle $2 \pi /p$
near $g$ is a smooth manifold $\mathcal{M}_g$ of dimension
\begin{align}
\label{rfdim}
\dim( \mathcal{M}_g) = - \frac{1}{2}(15\chi(M)+29\tau(M)) + 4\chi(\Sigma) +4[\Sigma]^2.
\end{align}
\end{corollary}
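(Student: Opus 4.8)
The plan is to show that the hypotheses force $H^0 = H^2 = \{0\}$ for the complex \eqref{thecomplex}, so that the Kuranishi description $\mathcal{M}_g \cong \Psi^{-1}(0)/H^0$ collapses to $\mathcal{M}_g \cong H^1$ near $g$; the dimension is then read off from the index computed in Theorem \ref{mainthm}. Recall that, the complex being elliptic in the orbifold sense, all three $H^i$ are finite-dimensional.

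First I would show $H^0 = \{0\}$. Here $H^0 = \ker \mathcal{K}_g$ is the space of conformal Killing fields of $g$. For a Ricci-flat metric the classical Bochner argument shows that a conformal Killing field on a compact manifold is divergence-free, hence Killing, hence parallel. Since $g$ is an orbifold-cone metric I must verify that the integrations by parts involved produce no boundary contribution along $\Sigma$: an orbifold-smooth section lifts, near $\Sigma$, to a smooth $\Gamma$-invariant tensor on a ball in $\RR^4$, $\Sigma$ has real codimension two, and the relevant integrands are bounded, so excising a shrinking tubular neighborhood of $\Sigma$ and passing to the limit recovers the usual identities. This is the same analysis that underlies the proof of Lemma \ref{lemma} in Section \ref{coro}. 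By hypothesis $g$ admits no parallel vector fields, so $H^0 = \{0\}$.

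Next I would show $H^2 = \{0\}$. By elliptic theory $H^2$ is isomorphic to the kernel of the formal adjoint $(\mathcal{D}^+)^*$ on $\Gamma(S^2_0(\Lambda^2_+))$, and there is a Weitzenb\"ock formula for the associated second-order operator. Itoh \cite{Itoh} used this formula in the smooth anti-self-dual Einstein case to deduce $H^2 = \{0\}$ when the scalar curvature is positive; when the scalar curvature vanishes, its zeroth-order term drops out and the formula instead forces every element of $H^2$ to be a parallel section of $S^2_0(\Lambda^2_+)$. As above, the orbifold-cone singularities do not obstruct the integration by parts -- this is exactly the extension of Itoh's argument carried out in Section \ref{coro}. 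Since $g$ is assumed to have no parallel sections of $S^2_0(\Lambda^2_+)$, we conclude $H^2 = \{0\}$.

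Finally, with $H^0 = H^2 = \{0\}$ the Kuranishi map $\Psi : H^1 \to H^2$ is the zero map into the zero space and the $H^0$-action is trivial, so $\mathcal{M}_g$ is locally isomorphic to the finite-dimensional vector space $H^1$; in particular it is a smooth manifold with $\dim \mathcal{M}_g = \dim H^1$. Since $Ind^{ASD}(M,g) = \dim H^0 - \dim H^1 + \dim H^2 = -\dim H^1$, combining with \eqref{Ind} gives
\begin{align*}
\dim \mathcal{M}_g = - Ind^{ASD}(M,g) = -\frac{1}{2}\big(15\chi(M) + 29\tau(M)\big) + 4\chi(\Sigma) + 4[\Sigma]^2,
\end{align*}
which is \eqref{rfdim}. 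The substantive point, and the main obstacle, is the pair of vanishing statements: everything hinges on the Bochner/Weitzenb\"ock integrations by parts having no hidden boundary terms from the singular set, which is why this corollary rests on the same analysis as Lemma \ref{lemma}.
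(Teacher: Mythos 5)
Your proposal is correct and follows essentially the same route as the paper: the Bochner argument (with $Ric=0$) identifies $H^0$ with parallel vector fields, Itoh's Weitzenb\"ock formula with $R=0$ (as in the proof of Lemma \ref{lemma}) identifies $H^2$ with parallel sections of $S^2_0(\Lambda^2_+)$, and the hypotheses plus the Kuranishi description and Theorem \ref{mainthm} give $\dim \mathcal{M}_g = \dim H^1 = -Ind^{ASD}(M,g)$, which is \eqref{rfdim}.
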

Since there
are many examples of Kahler-Ricci flat edge-cone
metrics which are not orbifold-cone metrics, it is a very interesting
problem to generalize Corollary 1.10 to the more general edge-cone case.
We expect the same formula holds for any cone angle.

We end with a brief outline of the paper. In Section \ref{setup}, 
we give the necessary background and set-up. In Section \ref{ecc}, 
we compute the required equivariant Chern characters. 
Theorem \ref{mainthm} is then proved in Section \ref{tgi}. 
Finally, in Section \ref{coro}, the proofs of 
Corollaries \ref{c1}--\ref{c5} are given. 

\subsection{Acknowledgements}
The authors would like to thank Claude LeBrun for helpful remarks.

\section{Analysis of $T^*M_{|_{\Sigma}}$ and $\Lambda^2_{\pm|_{\Sigma}}$}
\label{setup}
To compute the index it is necessary to understand how the pullback of the complexified principle symbol of the complex $\eqref{thecomplex}$, and the $K$-theoretic Thom class of the complexified normal bundle decompose into line bundles, and how the orbifold structure group acts on these decompositions.  This will be dealt with in Section $\ref{ecc}$.  In this section, however, we will analyze $T^*M_{|_{\Sigma}}$ and $\Lambda^2_{\pm|_{\Sigma}}$ because these bundles play a crucial role in the decompositions in Section $\ref{ecc}$.  For the rest of the paper we will denote the complexification of a real bundle, $E$, by $E_{\mathbb{C}}$.  

We begin by decomposing, in real coordinates,
\begin{align}
T^*M_{|_\Sigma}=T^*\oplus N^*,
\end{align}
which are the tangent and normal bundles to the singular set respectively.  In local orthonormal coordinates we can write
\begin{align}
\begin{split}
T^*&=\text{span}\{e^1,e^2\} \text{ and,}\\
N^*&=\text{span}\{e^3,e^4\}.
\end{split}
\end{align}
Using these coordinates we describe the orbifold structure group.  We are considering orbifold-cone metrics with
cone angle $2\pi/p$ so the orbifold structure group is the cyclic group $\Gamma$, of order $p$, consisting of elements $\gamma_j$, which can be locally written as
\begin{align}
\gamma_j=\left(
\begin{matrix}
1&0&0&0\\
0&1&0&0\\
0&0&\cos(\frac{2\pi}{p}j)& -\sin(\frac{2\pi}{p}j)\\
0&0&\sin(\frac{2\pi}{p}j)&\cos(\frac{2\pi}{p}j)\\
\end{matrix}
\right).
\end{align}
We will often refer to a general element $\gamma \in \Gamma$ and denote the angle of the corresponding action by $\theta$.  

We have the following sections of $T^*_{\mathbb{C}}$:
\begin{align}
\begin{split}
&\alpha_1=e^1+ie^2 \text{ and}\\
&\bar{\alpha}_1=e^1-ie^2,
\end{split}
\end{align}
and the following sections of $N^*_{\mathbb{C}}$:
\begin{align}
\begin{split}
&\alpha_2=e^3+ie^4 \text{ and}\\
&\bar{\alpha}_2=e^3-ie^4.
\end{split}
\end{align}
Now consider the line bundles over $F$:
\begin{align}
\begin{split}
\Theta_i&=\text{span}\{\alpha_i\} \text{ and}\\
\bar{\Theta}_i&=\text{span}\{\bar{\alpha}_i\}.  
\end{split}
\end{align}
It is clear that we have the line bundle decompositions:
\begin{align}
\begin{split}
\label{dec1}
T^*_{\mathbb{C}}&=\Theta_1\oplus \bar{\Theta}_1 \text{ and}\\
N^*_{\mathbb{C}}&=\Theta_2\oplus \bar{\Theta}_2,
\end{split}
\end{align}
and in these coordinates for $N^*_{ \mathbb{C}}$ we have
\begin{align}
\label{gamma-N}
\gamma_{|_{N^*_{\mathbb{C}}}}=\left(
\begin{matrix}
e^{i\theta} & 0 \\
0& e^{-i\theta} \\
\end{matrix}
\right).
\end{align}

Recall that $\Lambda^2_{+\mathbb{C}|_\Sigma}$ is generated by three sections, which can be locally written as
\begin{align}
\begin{split}
&\omega^+_{1_{\mathbb{C}}}=e^1\wedge e^2+ e^3\wedge e^4\\
&\omega^+_{2_{\mathbb{C}}}=e^1\wedge e^3+ e^4\wedge e^2=\frac{1}{2}(\alpha_1\wedge \alpha_2 +\bar{\alpha}_1\wedge \bar{\alpha}_2)\\
&\omega^+_{3_{\mathbb{C}}}=e^1\wedge e^4+ e^2\wedge e^3=-\frac{i}{2}(\alpha_1\wedge \alpha_2 -\bar{\alpha}_1\wedge \bar{\alpha}_2),
\end{split}
\end{align}
Since $\omega^+_{1_{\mathbb{C}}}$ is a global non-zero section, it spans a trivial line bundle which we will denote by $\mathbb{C}_+$.
Therefore, we can decompose $\Lambda^2_{+\mathbb{C}|_\Sigma}$ into line bundles as
\begin{align}
\label{dec+}
\Lambda^2_{+\mathbb{C}|_\Sigma}=\mathbb{C}_+\oplus (\Theta_1\otimes\Theta_2) \oplus (\bar{\Theta}_1\otimes \bar{\Theta}_2),
\end{align}
which, with respect to this decomposition, admits the group action
\begin{align}
\gamma_{|\Lambda^2_{+\mathbb{C}|_\Sigma}}=\left(
\begin{matrix}
1 & 0 & 0\\
0& e^{i\theta} & 0 \\
0&0& e^{-i\theta} \\
\end{matrix}
\right).
\end{align}

Similarly, recall that $\Lambda^2_{-\mathbb{C}|_\Sigma}$ is generated by three sections, which can be locally written as
\begin{align}
\begin{split}
&\omega^-_{1_{\mathbb{C}}}=e^1\wedge e^2- e^3\wedge e^4\\
&\omega^-_{2_{\mathbb{C}}}=e^1\wedge e^3- e^4\wedge e^2=\frac{1}{2}(\alpha_1\wedge \bar{\alpha}_2 +\bar{\alpha}_1\wedge \alpha_2)\\
&\omega^-_{3_{\mathbb{C}}}=e^1\wedge e^4- e^2\wedge e^3=-\frac{i}{2}(\alpha_1\wedge \bar{\alpha}_2 +\bar{\alpha}_1\wedge \alpha_2),
\end{split}
\end{align}
Here we will denote the trivial line bundle that is the span of $\omega^-_{1_{\mathbb{C}}}$ as $\mathbb{C}_-$.
Therefore, we can decompose $\Lambda^2_{-\mathbb{C}|_\Sigma}$ into line bundles as
\begin{align}
\label{dec-}
\Lambda^2_{-\mathbb{C}|_\Sigma}=\mathbb{C}_-\oplus  (\bar{\Theta}_1\otimes \Theta_2)\oplus (\Theta_1\otimes\bar{\Theta}_2),
\end{align}
which, with respect to this decomposition, admits the group action
\begin{align}
\gamma_{|\Lambda^2_{-\mathbb{C}|_\Sigma}}=\left(
\begin{matrix}
1 & 0 & 0\\
0& e^{i\theta} & 0 \\
0&0& e^{-i\theta} \\
\end{matrix}
\right).
\end{align}

\section{Equivariant Chern characters}
\label{ecc}
Throughout the rest  of this paper, we will denote the Euler class of $T^*$ by $e$, and the orbifold Euler class of $N^*$ by $\hat{h}$.  We will also denote their pairings with the fundamental class of $\Sigma$, $\langle e,[\Sigma]\rangle$ and $\langle\hat{h},\Sigma\rangle$, by $\chi(\Sigma)$ and $[\hat{\Sigma}]^2$ respectively.  It is important to notice that if we consider $\Sigma$ as a smoothly embedded submanifold with the regular Euler class of its normal bundle $h$, then $[\Sigma]^2=\langle h,[\Sigma]\rangle=p[\hat{\Sigma}]^2$, the self-intersection number of $\Sigma$ in $M$, where $p$ comes from the cone angle.  Also, for the remainder of the construction of the index we will assume that $\Sigma$ is orientable.  This is a necessary assumption for the Index theorem.  However, once we prove Theorem $\ref{mainthm}$ for $\Sigma$ orientable, it is very easy to show that it also holds for $\Sigma$ non-orientable.

We will frequently make use of the equivariant Chern characters of the complex line bundles in decomposition $\eqref{dec1}$.  Since $\gamma$ acts trvially on $\Theta_1\oplus \bar{\Theta}_1$, and acts on $\Theta_2\oplus \bar{\Theta}_2$ as in $\eqref{gamma-N}$, we see that
\begin{align}
\begin{split}
&ch_{\gamma}(\Theta_1)=ch(\Theta_1)=\sum_{j=0}^{\infty}\frac{e^j}{j!}\\
&ch_{\gamma}(\bar{\Theta}_1)=ch(\bar{\Theta}_1)=\sum_{j=0}^{\infty}\frac{(-e)^j}{j!}\\
&ch_{\gamma}(\Theta_2)=e^{i\theta}ch(\Theta_2)=e^{i\theta}\sum_{j=0}^{\infty}\frac{\hat{h}^j}{j!}\\
&ch_{\gamma}(\bar{\Theta}_2)=e^{-i\theta}ch(\bar{\Theta}_1)=e^{-i\theta}\sum_{j=0}^{\infty}\frac{(-\hat{h})^j}{j!}.
\end{split}
\end{align}

To find the anti-self-dual index, we need to compute the equivariant Chern character on the pullback of the complexified principle symbol over $\Sigma$, $i^*\sigma$:
\begin{align}
i^*\sigma=i^*[T^*M_{\mathbb{C}}]-i^*[S^2_0T^*M_{\mathbb{C}}]+i^*[S^2_0\Lambda^2_{+\mathbb{C}|_\Sigma}],
\end{align}
where $i:\Sigma \rightarrow M$ is the inclusion of the singular set $\Sigma$ into the orbifold $M$.  We will also need to compute the equivariant Chern character of the $K$-theoretic Thom class of the complexified normal bundle:
\begin{align}
\lambda_{-1}N^*_{\mathbb{C}}=[\Lambda^0N^*_{\mathbb{C}}]-[\Lambda^1N^*_{\mathbb{C}}]+[\Lambda^2N^*_{\mathbb{C}}]
\end{align}

We begin this section by computing $ch_{\gamma}(\Lambda^2_{\pm\mathbb{C}|_\Sigma})$, next we compute $ch_{\gamma}(i^*\sigma)$ and finally we compute $ch_{\gamma}(\lambda_{-1}N^*_{\mathbb{C}})$.

\subsection{Equivariant Chern characters of $\Lambda^2_{\pm\mathbb{C}|_\Sigma}$}
Using the decomposition $\eqref{dec+}$ of $\Lambda^2_{+\mathbb{C}|_\Sigma}$,  we have that
\begin{align}
ch_{\gamma}(\Lambda^2_{+\mathbb{C}|_\Sigma})=ch_{\gamma}(\mathbb{C})+ch_{\gamma} (\Theta_1\otimes\Theta_2) +ch_{\gamma} (\bar{\Theta}_1\otimes \bar{\Theta}_2).
\end{align}
The first term on the right hand side is $1$ because the $\gamma$-action on $\mathbb{C}_+$ is trivial.  We compute the second two terms on the right hand side as:
\begin{align*}
ch_{\gamma}(\Theta_1\otimes \Theta_2)&=ch_{\gamma}(\Theta_1)\cdot ch_{\gamma}(\Theta_2)=(1+e+\frac{e^2}{2}+\cdots)\cdot e^{i\theta}(1+\hat{h}+\frac{h^2}{2}+\cdots)\\
&=e^{i\theta}(1+e+\hat{h}+e\hat{h}+\frac{e^2}{2}+\frac{\hat{h}^2}{2}+\cdots) \text{, and }\\
ch_{\gamma}(\bar{\Theta}_1\otimes \bar{\Theta}_2)&=ch_{\gamma}(\bar{\Theta}_1)\cdot ch_{\gamma}(\bar{\Theta}_2)=(1-e+\frac{e^2}{2}+\cdots)\cdot e^{-i\theta}(1-h+\frac{\hat{h}^2}{2}+\cdots)\\
&=e^{-i\theta}(1-e-\hat{h}+e\hat{h}+\frac{e^2}{2}+\frac{\hat{h}^2}{2}+\cdots).
\end{align*}
Therefore, we can combine these terms to find
\begin{align}
\label{Lambda+}
ch_{\gamma}(\Lambda^2_{+\mathbb{C}|_\Sigma})=1+\cos(\theta)(2+2e\hat{h}+e^2+\hat{h}^2+\cdots)+i\sin(\theta)(2e+2\hat{h}+\cdots).
\end{align}
Similarly, we find that
\begin{align}
\label{Lambda-}
ch_{\gamma}(\Lambda^2_{-\mathbb{C}|_\Sigma})=1+\cos(\theta)(2-2e\hat{h}+e^2+\hat{h}^2+\cdots)+i\sin(\theta)(-2e+2\hat{h}+\cdots).
\end{align}

\subsection{Equivariant Chern character of $i^*\sigma$}
We will begin by computing the equivariant Chern characters of the individual $K$-theoretic classes that compose $i^*\sigma$ and then sum them accordingly to find $ch_{\gamma}(i^*\sigma)$.  

First consider the bundle $i^*(T^*M_{\mathbb{C}})=T^*_{\mathbb{C}}\oplus N^*_{\mathbb{C}}$.  We have
\begin{align}
\begin{split}
ch_{\gamma}&(i^*[T^*M_{\mathbb{C}}])=ch_{\gamma}(T^*_{\mathbb{C}}\oplus N^*_{\mathbb{C}})\\
&=(2+e^2+\cdots)+e^{i\theta}(1+\hat{h}+\frac{\hat{h}^2}{2}+\cdots)+e^{-i\theta}(1-\hat{h}+\frac{\hat{h}^2}{2}+\cdots)\\
&=(2+e^2+\cdots)+\cos(\theta)(2+\hat{h}^2+\cdots)+i\sin(\theta)(2\hat{h}+\cdots).
\end{split}
\end{align}

Next, consider the bundle $i^*(S^2_0T^*M_{\mathbb{C}})$.  Using the formulas $\eqref{Lambda+}$ and $\eqref{Lambda-}$, and the bundle isomorphism $S^2_0T^*M=\Lambda^2_+\otimes \Lambda^2_-$ we compute
\begin{align}
\begin{split}
ch_{\gamma}&(i^*[S^2_0T^*M_{\mathbb{C}}])=ch_{\gamma}(i^*\Lambda^2_{+\mathbb{C}}\otimes i^*\Lambda^2_{-\mathbb{C}})=ch_{\gamma}(\Lambda^2_{+\mathbb{C}|_\Sigma})\cdot ch_{\gamma}\Lambda^2_{-\mathbb{C}|_\Sigma})\\
&=\big[1+4\cos(\theta)+4\cos^2(\theta)\big]+\hat{h}\big[i4\sin(\theta)+i8\sin(\theta)\cos(\theta)\big]\\
&\phantom{==}+e^2\big[4+2\cos(\theta)\big]+\hat{h}^2\big[-4+2\cos(\theta)+8\cos^2(\theta)\big]+\cdots.
\end{split}
\end{align}

Finally, consider the bundle $i^*(S^2_0\Lambda^2_{+\mathbb{C}})=S^2_0\Lambda^2_{+\mathbb{C}|_\Sigma}$, which decomposes as
\begin{align}
\label{decom}
S^2_0\Lambda^2_{+\mathbb{C}|_\Sigma}=\{( \Theta_1\otimes \Theta_2)\oplus (\bar{\Theta}_1\otimes \bar{\Theta}_2)\}
\oplus S^2_0\big( (\Theta_1\otimes \Theta_2)\oplus (\bar{\Theta}_1\otimes \bar{\Theta}_2)\big)\oplus \mathbb{C}_{tr}
\end{align}
where $\mathbb{C}_{tr}$ is a trivial line bundle, with trivial $\gamma$-action, corresponding to the trace term.  
Using this, we are able to compute
\begin{align*}
ch_{\gamma}(i^*[S^2_0&\Lambda^2_+])=ch_{\gamma}\big(\Theta_1\otimes \Theta_2\oplus \bar{\Theta}_1\otimes \bar{\Theta}_2\big) 
+ ch_{\gamma}\big(S^2_0\big(\Theta_1\otimes \Theta_2\oplus \bar{\Theta}_1\otimes \bar{\Theta}_2\big)\big)+1\\
&=\big[\cos(\theta)(2+2eh+e^2+\hat{h}^2+\cdots)+i\sin(\theta)(2e+2\hat{h}+\cdots)\big]\\
&\phantom{=}+\big[\big(\cos(\theta)(2+2e\hat{h}+e^2+\hat{h}^2+\cdots)+i\sin(\theta)(2e+2\hat{h}+\cdots)\big)^2-2\big]+1\\
&=\big[-1+2\cos(\theta)+4\cos^2(\theta)\big]+e\big[i2\sin(\theta)+i8\sin(\theta)\cos(\theta)\big]\\
&\phantom{=}+\hat{h}\big[i2\sin(\theta)+i8\sin(\theta)\cos(\theta)\big]+eh\big[-8+2\cos(\theta)+16\cos^2(\theta)\big]\\
&\phantom{=}+e^2\big[-4+\cos(\theta)+8\cos^2(\theta)\big]+\hat{h}^2\big[-4+\cos(\theta)+8\cos^2(\theta)\big]+\cdots.
\end{align*}

Now, we are able to compute the $ch_{\gamma}(i^*\sigma)$ by taking the appropriate sum of the above Chern characters:
\begin{align}
\begin{split}
ch_{\gamma}(i^*\sigma)&=ch_{\gamma}(i^*[T^*M_{\mathbb{C}}])-ch_{\gamma}(i^*[S^2_0T^*M_{\mathbb{C}}])+ch_{\gamma}(i^*[S^2_0\Lambda^2_+])\\
&=e\big[2i\sin(\theta)+8i\sin(\theta)\cos(\theta)\big]+e\hat{h}\big[-8+2\cos(\theta)+16\cos^2(\theta)\big]\\
&\phantom{==}+e^2\big[8\cos^2(\theta)-\cos(\theta)-7\big]+\cdots.
\end{split}
\end{align}

\subsection{Equivariant Chern character of $\lambda_{-1}N^*_{\mathbb{C}}$}
We begin by examining the bundles representing the $K$-theoretic classes that compose $\lambda_{-1}N^*_{\mathbb{C}}$.  

First, both $\Lambda^0 N^*_{\mathbb{C}}$ and $\Lambda^2 N^*_{\mathbb{C}}$ have non-vanishing global sections, so they are trivial, and clearly admit a trivial $\gamma$ action.  Next, it is clear that $\Lambda^1 N^*_{\mathbb{C}}=N^*_{\mathbb{C}}$.  Therefore, we find that
\begin{align}
\begin{split}
\label{den}
ch_{\gamma}(\lambda_{-1}N^*_{\mathbb{C}})&=ch_{\gamma}(\Lambda^0N^*_{\mathbb{C}})-ch_{\gamma}(\Lambda^1N^*_{\mathbb{C}})+ch_{\gamma}(\Lambda^2N^*_{\mathbb{C}})\\
&=2-ch_{\gamma}(N^*_{\mathbb{C}})\\
&=2-\cos(\theta)(2+\hat{h}^2)-i\sin(\theta)(2\hat{h})+\cdots.
\end{split}
\end{align}

\section{The index}
\label{tgi}
We begin this section with some remarks on the definition of the index in the orbifold case.  As mentioned in the introduction, the index is computed by looking at smooth sections in the orbifold sense.
To define this, we recall that
an orbifold vector bundle is defined in terms of orbifold charts.  Over a neighborhood $U_x$ away from 
$\Sigma$ it is defined as a vector bundle in the usual sense, and over a neighborhood 
$U_q=\tilde{U}_q/\Gamma$ around $q\in \Sigma$, where 
$\tilde{U}_q$ is a neighborhood of the origin in $\mathbb{R}^4$, 
it is identified with the quotient of a smooth $\Gamma$-equivariant vector bundle 
over $\tilde{U}_q$.  On overlaps the obvious compatibility conditions are satisfied.  Smooth sections of an orbifold vector bundle are globally defined sections on $M$.  On a neighborhood $U_x$ away from 
$\Sigma$ it is smooth in the ordinary sense, and on a neighborhood $U_q$ of $q\in \Sigma$, it is identified with a smooth $\Gamma$-equivariant section of the corresponding $\Gamma$-equivariant bundle over $\tilde{U}_q$ defining the orbifold vector bundle in that neighborhood.

With this understandng of the index, from \cite{Kawasaki} and \cite{LM}, recall that the anti-self dual index for $(M,g)$, where $g$ is 
and orbifold-cone metric with singular set $\Sigma$, is given by
\begin{align*}
Ind^{ASD}(M,g)=\frac{1}{2}(15\chi_{orb}(M)+29\tau_{orb}(M))-\bigg\langle\frac{1}{|\Gamma|}\sum_{\gamma \neq Id}\frac{ch_{\gamma}(i^*\sigma)}{ch_{\gamma}(\lambda_{-1}N_{\mathbb{C}})e}\hat{A}(\Sigma)^2,[\Sigma]\bigg\rangle.
\end{align*}
Note that Kawaski's formula is written in terms of evaluation on the 
orbifold tangent bundle of the singular set, but writing it in terms of evaluation on the
fundamental class of $\Sigma$ introduces the Euler class in the 
denominator.

Next, using the formulas \eqref{GB} and \eqref{HST}, 
we can rewrite
\begin{align*}
Ind^{ASD}(M,g)=\frac{1}{2}(15\chi_{top}(M)+29\tau_{top}(M))&-\frac{15}{2}\bigg(\frac{p-1}{p}\bigg)\chi(\Sigma)-\frac{29}{6}\bigg(\frac{p^2-1}{p}\bigg)[\hat{\Sigma}]^2\\
&-\bigg\langle\frac{1}{p}\sum_{j=1}^{p-1}\frac{ch_{\gamma_j}(i^*\sigma)}{ch_{\gamma_j}(\lambda_{-1}N_{\mathbb{C}})e}\hat{A}(\Sigma)^2,[\Sigma]\bigg\rangle.
\end{align*}

\subsection{Computation of correction terms}  
Using the computation of the denominator, $\eqref{den}$, we have
\begin{align*}
\big[ch_{\gamma}(\lambda_{-1}N_{\mathbb{C}})\big]^{-1}&=\big[{2-\cos(\theta)(2+\hat{h}^2)-i\sin(\theta)(2\hat{h})}\big]^{-1}\\
&=\Big[(2-2\cos(\theta))[1-\frac{1}{2-2\cos(\theta)}(\cos(\theta)(\hat{h}^2)+i\sin(\theta)(2\hat{h}))]\Big]^{-1}\\
&=\Big[(2-2\cos(\theta))[1-\mathbb{D}]\Big]^{-1}
\end{align*}
Then, by using a geometric series, we see that
\begin{align}
\big[ch_{\gamma}(\lambda_{-1}N_{\mathbb{C}})\big]^{-1}=\frac{1}{(2-2\cos(\theta))}[1+\mathbb{D}+\mathbb{D}^2+\cdots].
\end{align}
Now, we compute:
\begin{align*}
&\frac{ch_{\gamma}(i^*\sigma)}{ch_{\gamma}(\lambda_{-1}N_{\mathbb{C}})e}=\frac{ch_{\gamma}(i^*\sigma)}{(2-2\cos(\theta))e}[1+\mathbb{D}+\mathbb{D}^2+\cdots]\\
&\phantom{===}=\frac{1}{2}e\frac{(8\cos(\theta)+7)(\cos(\theta)-1)}{(1-\cos(\theta))}+\hat{h}\bigg[-\frac{4}{1-\cos(\theta)}+\frac{\cos(\theta)}{1-\cos(\theta)}+\frac{8\cos^2(\theta)}{1-\cos(\theta)}\bigg]\\
&\phantom{=====}+\bigg[\frac{2i\sin(\theta)+8i\sin(\theta)\cos(\theta)}{2-2\cos(\theta)}\bigg]\cdot\bigg[\hat{h}\frac{2i\sin(\theta)}{2-2\cos(\theta)}\bigg]+\cdots\\
&\phantom{===}=-\frac{1}{2}e[8\cos(\theta)+7]+\hat{h}\bigg[-9-8\cos(\theta)+\frac{5}{1-\cos(\theta)}\bigg]\\
&\phantom{=====}+\hat{h}\bigg[-\frac{1+5\cos(\theta)+4\cos^2(\theta)}{1-\cos(\theta)}\bigg]+\cdots\\
&\phantom{===}=-\frac{1}{2}e[8\cos(\theta)+7]+\hat{h}\bigg[-4\cos(\theta)-\frac{5}{1-\cos(\theta)}\bigg]+\cdots.
\end{align*}
Finally, we find:
\begin{align}
\label{sum}
\frac{1}{p}\sum_{j=1}^{p-1}\frac{ch_{\gamma_j}(i^*\sigma)}{ch_{\gamma_j}(\lambda_{-1}N_{\mathbb{C}})e}=\frac{1}{p}\bigg[-\frac{1}{2}e(7p-15)+\hat{h}\big(4-\frac{5}{6}(p^2-1)\big)\bigg].
\end{align}

\subsection{Computation of the index}
Using formula $\eqref{sum}$, we begin to compute the index:
\begin{align}
\begin{split}
\label{ind}
Ind^{ASD}(M,g)&=\frac{1}{2}(15\chi_{top}(M)+29\tau_{top}(M))\\
&\phantom{==}-\frac{15}{2}(1-p^{-1})\chi(\Sigma)-\frac{29}{6}\bigg(\frac{p^2-1}{p}\bigg)[\hat{\Sigma}]^2\\
&\phantom{==}-\bigg\langle\frac{1}{p}\bigg[-\frac{1}{2}e(7p-15)+\hat{h}(4-\frac{5}{6}(p^2-1))\bigg]\hat{A}(\Sigma)^2,[\Sigma]\bigg\rangle.
\end{split}
\end{align}

For a real oriented plane bundle $E$, whose complexification decomposes into complex line bundles as $E_{\mathbb{C}}=l\oplus \bar{l}$, we have that
\begin{align}
\begin{split}
\hat{A}(E)^2&=Td_{\mathbb{C}}(E_{ \mathbb{C}})=Td(l\oplus \bar{l})=Td(l)Td(\bar{l})\\
&=(1+\frac{1}{2}c_1(l)+\frac{1}{12}c_1(l)^2+\cdots)(1-\frac{1}{2}c_1(l)+\frac{1}{12}c_1(l)^2+\cdots)\\
&=1-\frac{1}{12}c_1(l)^2+\cdots.
\end{split}
\end{align}
In the fourth term on the right hand side of $\eqref{ind}$ we see that $\hat{A}^2(\Sigma)$ is only multiplied by terms containing Euler classes of the tangent and normal bundles of $\Sigma$.  Since this product is paired with the fundamental class of a surface, it is clear that only the first term in $\hat{A}^2(\Sigma)$ contributes to the index.  Therefore
\begin{align*}
Ind^{ASD}(M,g)&=\frac{1}{2}(15\chi_{top}(M)+29\tau_{top}(M))+\frac{1}{p}\bigg[\bigg(\frac{7}{2}p-\frac{15}{2}\bigg)+\bigg(\frac{15}{2}-\frac{15}{2}p\bigg)\bigg]\chi(\Sigma)\\
&\phantom{==}+\frac{1}{p}\bigg[-4+\frac{5}{6}(p^2-1)-\frac{29}{6}(p^2-1)\bigg][\hat{\Sigma}]^2\\
&=\frac{1}{2}(15\chi_{top}(M)+29\tau_{top}(M))-4\chi(\Sigma)-4p[\hat{\Sigma}]^2.
\end{align*}
Since $p[\hat{\Sigma}]^2=[\Sigma]^2$ we arrive at formula \eqref{Ind}:
\begin{align}
Ind^{ASD}(M,g)=\frac{1}{2}(15\chi_{top}(M)+29\tau_{top}(M))-4\chi(\Sigma)-4[\Sigma]^2.
\end{align}

It is clear from examining the sign changes in the above computations that the formula for the self-dual complex is
\begin{align}
Ind^{SD}(M,g)=\frac{1}{2}(15\chi_{top}(M)-29\tau_{top}(M))-4\chi(\Sigma)+4[\Sigma]^2,
\end{align}
which is formula \eqref{Ind2}.

Finally, when $\Sigma$ is a non-orientable surface the formulas \eqref{Ind} and
\eqref{Ind2} still hold.
This is proved by evaluating the pullbacks of the respective Euler classes to the orientable double cover, evaluating on that fundamental class and then dividing by~$2$.

\section{Proofs of Corollaries}
\label{coro}

We begin this section with a proposition bounding $\dim(H^0)$ for an 
orbifold-cone metric on $(M,\Sigma)$, which will be very useful in the following proofs.

\begin{proposition}
\label{bound}
Let $g$ be an orbifold-cone metric on $(M,\Sigma)$.  Then
\begin{align}
\dim(H^0)\leq 11,
\end{align}
with equality possible only if $\Sigma=S^2$.  Moreover
\begin{align}
\dim(H^0)\leq
\begin{cases}
7 \text{ when $\Sigma=T^2$}\\
5 \text{ when $\Sigma=j\#T^2$ for $j>1$}\\
8\text{ when $\Sigma=\RP^2$}\\
7 \text{ when $\Sigma=\RP^2\#\RP^2$}\\
5 \text{ when $\Sigma=j\#\RP^2$ for $j>2$}.
\end{cases}
\end{align}
\end{proposition}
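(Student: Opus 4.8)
The plan is to identify $H^0$ of the complex \eqref{thecomplex} with the space of conformal Killing fields of $g$ on $(M,\Sigma)$ in the orbifold sense, i.e.\ vector fields $\xi$ with $\mathcal{K}_g\xi=0$ that are smooth orbifold sections near $\Sigma$. Since $\ker\mathcal{K}_g$ is the Lie algebra of the conformal group, these form a finite-dimensional Lie algebra $\mathfrak{c}$, and the first step is to bound $\dim\mathfrak{c}$ in terms of the topology of $\Sigma$. The key observation is that any conformal Killing field $\xi$ must be tangent to $\Sigma$ along $\Sigma$ (its flow preserves the singular locus, since it preserves the metric's conformal class and hence the cone structure), so restriction gives a linear map $\rho:\mathfrak{c}\to \Gamma(T\Sigma)$ with image contained in the conformal Killing fields of the induced conformal structure on $\Sigma$, which is at most $3$-dimensional for $\Sigma=S^2$ (resp.\ at most $1$ for $T^2$, and $0$ for higher genus or $j\#\RP^2$ with $j>2$, and the intermediate values for $\RP^2$ and $\RP^2\#\RP^2$ coming from lifting to the orientable double cover). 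So the bound will read $\dim H^0\le \dim(\ker\rho)+\dim(\mathrm{im}\,\rho)$, and the arithmetic $11=8+3$, $7=6+1$, $5=5+0$, etc., suggests that $\dim(\ker\rho)$ — the conformal Killing fields vanishing on $\Sigma$ — is uniformly bounded.

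The second and main step is therefore to bound $\dim(\ker\rho)$, the space of conformal Killing fields of $g$ that vanish along $\Sigma$. A conformal Killing field vanishing on a hypersurface is determined, by unique continuation for the overdetermined conformal Killing equation, by its $1$-jet along $\Sigma$; and vanishing of the $0$-jet forces the linearization $\nabla\xi|_\Sigma$ to be a section of $\mathfrak{co}(4)=\Lambda^2\oplus\RR$ along $\Sigma$ satisfying further pointwise constraints from $\mathcal{K}_g\xi=0$. Here the orbifold structure enters crucially: near $\Sigma$ the field $\xi$ lifts to a $\Gamma$-equivariant conformal Killing field on $\tilde U_q\subset\RR^4$ vanishing on the fixed-point set $\{y_1=y_2=0\}$, and $\Gamma$-equivariance together with the explicit generator $(z_1,z_2)\mapsto(z_1,e^{2\pi i/p}z_2)$ cuts down the allowed $1$-jets. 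I would work out, in the flat local model, exactly which elements of $\mathfrak{co}(4)$ are compatible with both vanishing on the axis and $\Gamma$-equivariance — this pins down $\dim(\ker\rho)$ by a dimension count of an explicit linear subspace (independent of $p\ge 2$, which is why the bound is uniform), while for $\Sigma=S^2$ one gets the slightly larger value because $S^2$ bounds and the global topology allows one extra. The bound $8$ for the flat/round model globalizes via the usual averaging/linearization argument for conformal groups of compact orbifolds.

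The expected main obstacle is precisely the bookkeeping in this local $1$-jet analysis: correctly intersecting the three linear conditions — the conformal Killing equation on $\RR^4$, vanishing on the codimension-$2$ axis, and $\Gamma$-equivariance — and then matching the resulting dimension to the global count, including the subtlety that for $\Sigma=S^2$ the inequality can be sharp while for all other surfaces it drops. A secondary technical point is the non-orientable case ($\RP^2$, $\RP^2\#\RP^2$, $j\#\RP^2$): there I would pass to the orientable double cover $\tilde\Sigma$, bound the conformal Killing algebra there, and descend, using that a conformal Killing field downstairs pulls back to a deck-invariant one upstairs — so the bounds for $\RP^2$ and $\RP^2\#\RP^2$ are read off from those for $S^2$ and $T^2$ together with the dimension of the invariant subalgebra under the deck involution. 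The equality statement ``only if $\Sigma=S^2$'' then follows by inspecting when the image of $\rho$ can be maximal.
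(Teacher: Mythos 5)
Your overall strategy --- identify $H^0$ with the conformal Killing fields of the orbifold, restrict to $\Sigma$, and bound the kernel and image of the restriction map separately --- is structurally the same as the paper's proof (which follows Bagaev--Zhukova: the homomorphism $\phi:\mathrm{Conf}(M,\Sigma)\to\mathrm{Conf}(\Sigma)$, together with an orbifold argument that $\dim H^0=\dim\mathrm{Conf}(M,\Sigma)$). But your dimension counts on both sides of the split are wrong, and not just as bookkeeping. For the image you use the isometry algebras of particular metrics ($3$ for $S^2$, $1$ for $T^2$), whereas the restriction of a conformal Killing field only sees the induced conformal class on $\Sigma$, so the relevant bound is the conformal algebra: $6$ for $S^2$ (the M\"obius algebra), $2$ for $T^2$, $3$ for $\RP^2$, $2$ for $\RP^2\#\RP^2$, and $0$ otherwise. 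To reach the stated totals you are then forced to let $\dim(\ker\rho)$ be $8$, $6$, or $5$ depending on $\Sigma$, which contradicts your own assertion that the kernel is uniformly bounded and is in fact false: the kernel consists of conformal fields vanishing along $\Sigma$, and this is controlled purely locally, independently of the topology of $\Sigma$ --- there is no mechanism by which ``$S^2$ bounds, so the global topology allows one extra.'' The paper bounds this kernel uniformly by $5$: a conformal transformation fixing $\Sigma$ pointwise acts on the normal bundle through $\mathrm{O}(2)$ (dimension $1$), and the residual part is controlled by the first prolongation of $\mathfrak{co}(4)$ (dimension $4$). With the correct split the arithmetic is $11=6+5$, $7=2+5$, $5=0+5$, $8=3+5$, $7=2+5$, $5=0+5$, and the equality statement holds because $\dim\mathrm{Conf}(\Sigma)=6$ only for $\Sigma=S^2$.

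Two further points. First, $\Sigma$ has codimension two, not one, so ``vanishing on a hypersurface'' and the attendant unique-continuation-from-a-$1$-jet discussion is off (a conformal Killing field is determined by its $2$-jet, and the relevant analysis is of fields vanishing on a codimension-two submanifold, whose space in the flat model is in fact very small). Second, $\Gamma$-equivariance is not what produces the kernel bound --- the bound $\le 5$ already holds when $p=1$ --- so your proposed local computation ``independent of $p\ge 2$'' addresses the wrong issue; the place where the orbifold structure genuinely enters is in verifying that conformal Killing fields in the orbifold sense integrate to one-parameter groups of conformal transformations of $(M,\Sigma)$, i.e. that $\dim H^0=\dim\mathrm{Conf}(M,\Sigma)$, a step your proposal takes for granted but which the paper proves by lifting flows to the local uniformizing charts.
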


\begin{proof}
We begin by proving bounds on the size of the conformal automorphism group.  The proof follows the idea of
Bagaev-Zhukova \cite{BagaevZhukova}, and we briefly recall their argument here:

We have the natural homomorphsim
\begin{align}
\phi:Conf(M,\Sigma)\rightarrow Conf(\Sigma),
\end{align}
where $Conf(M,\Sigma)$ and $Conf(\Sigma)$ are the conformal automorphism group of $(M,\Sigma)$ and $\Sigma$ respectively,
and an exact sequence
\begin{align}
1\rightarrow Ker(\phi)\rightarrow Conf(M,\Sigma) \rightarrow Im(\phi) \rightarrow 1.
\end{align}
This implies that $Conf(M,\Sigma)=Ker(\phi)\rtimes Im(\phi)$.  

Consider an element $f\in Ker(\phi)$ and its pushforward map $f_*:TM\rightarrow TM$.  This induces the maps $f_*|_{T\Sigma}=Id$ since $f|_{\Sigma}=Id$, and  $f_*|_{N\Sigma}\in \rm{O}(2)$.  So we get a homomorphism
\begin{align}
\alpha:Ker(\phi)\rightarrow\rm{O}(2)
\end{align}
by sending $f\mapsto f^*|_{N\Sigma}\in \rm{O}(2)$.  Therefore 
\begin{align}
\dim(Ker(\phi))\leq \dim (\rm{O}(2))+\dim (Ker(\alpha))\leq 5,
\end{align}
since $\dim (Ker(\alpha))$ is less than or equal to the dimension of the first prolongation of the Lie algebra of the $G$-structure group, which is $4$.  Therefore, $\dim(Conf(M,\Sigma))\leq\dim(Conf(\Sigma))+5$.

Now, $\dim(Im(\phi))\leq \dim(Conf(\Sigma))\leq 6$, with equality if and only if $\Sigma=S^2$.  We also know that
 \begin{align*}
\dim(Conf(j\#T^2))=
\begin{cases}
2 \text{ for $j=1$}\\
0 \text{ for $j\geq 2$}
\end{cases}
\text{ and }
\dim(Conf(j\#\RP^2))=
\begin{cases}
3 \text{ for $j=1$}\\
2 \text{ for $j=2$}\\
0 \text{ for $j\geq 3$}.
\end{cases}
\end{align*}
We complete the proof by showing that 
\begin{align}
\dim(H^0)=\dim(Conf(M,\Sigma)\leq\dim(Conf(\Sigma))+5.
\end{align}
In the smooth case, a 
conformal Killing field is the derivative of a $1$-parameter family of conformal transformations.  
Thus, the space of conformal Killing fields is identified with the Lie algebra of the conformal automorphism group.
For the orbifold case take a neighborhood $U_q$ around $q\in \Sigma$ and lift it to a neighborhood $\tilde{U}_q$ around the origin in $\mathbb{R}^4$, on which the metric $g$ pulls back to $\tilde{g}$, a $\Gamma$-invariant metric.  Any conformal Killing field on $U_q$ lifts to a $\Gamma$-invariant conformal Killing field on $\tilde{U}_q$ since $\tilde{g}$ is $\Gamma$-invariant.  The local $1$-parameter families of diffeomorphisms on $\tilde{U}_q$ coming from the lifts of these conformal Killing fields must also be $\Gamma$-invariant since the flow is locally defined.  From the uniqueness of the flow 
on each $\tilde{U}_q$, these patch together to give a globally defined $1$-parameter group of conformal transformations 
on the orbifold.  Therefore, in the orbifold case we also have, $\dim(H^0)=\dim(Conf(M,\Sigma))$.

\end{proof}

The fact that a self-dual positive scalar curvature Einstein orbifold-cone metric is unobstructed is crucial to the proof of 
Corollary \ref{c1}, so we now prove Lemma \ref{lemma}:

\begin{proof}[Proof of Lemma \ref{lemma}]
Let $g$ be an anti-self-dual Einstein orbifold-cone metric with positive scalar curvature on $M$ with singular set $\Sigma$. 
Let $Z\in Ker(\mathcal{D}^{+^*})$.  Then $\mathcal{D}^+\mathcal{D}^{+^*}Z=0$, which implies
\begin{align}
\label{itoh}
\mathcal{D}^+\mathcal{D}^{+^*}Z=\frac{1}{24}(3\nabla^*\nabla+2R)(2\nabla^*\nabla+R)Z=0,
\end{align}
using the Weitzenb\"ock formula of Itoh in the case that $g$ is Einstein \cite{Itoh}, where $R$ is the scalar curvature.  Also, recall that $\nabla^*\nabla=\Delta$, 
the rough laplacian.

Cut out $N_{\epsilon}$, an $\epsilon$-tubular neighborhood of $\Sigma$ and denote the outer unit
normal vector and induced volume form on $\partial (M\setminus N_{\epsilon})$ by $n$ and $d\sigma$ respectively.
Using Itoh's Weitzenb\"ock formula \eqref{itoh} and integrating by parts, we see that
\begin{align}
\begin{split}
\label{integral}
\int \limits_{M\setminus N_{\epsilon}}\langle \mathcal{D}^+\mathcal{D}^{+^*}Z,Z\rangle &dV
=\int \limits_{M\setminus N_{\epsilon}}\Big[\frac{1}{4}|\Delta Z|^2+\frac{7}{24}R|\nabla Z|^2
+\frac{1}{12}R^2|Z|^2\Big]dV\\
&-\int \limits_{\partial (M\setminus N_{\epsilon})}\Big[ \frac{7}{48}R\nabla_n |Z|^2
+\frac{1}{4} \langle \Delta Z,  \nabla_nZ\rangle -\frac{1}{4} \langle \nabla_n\Delta Z,  Z\rangle \Big] d\sigma.
\end{split}
\end{align}

Since $Z$ is a smooth section in the orbifold sense, $Z$ and its derivatives are bounded.  Therefore, by dominated convergence, the solid integrals limit to the corresponding solid integrals on $M$ as $\epsilon \rightarrow 0$.  
For $\epsilon$ sufficiently small, $\partial (M\setminus N_{\epsilon})$ is a smooth submanifold, and we have the estimate
\begin{align}
Area(\partial (M\setminus N_{\epsilon}))<C\epsilon,
\end{align}
for some constant $C$.  Consequently the boundary integral limits to $0$ as $\epsilon \rightarrow 0$, and we have that
\begin{align}
\int\limits_{M}\langle \mathcal{D}^+\mathcal{D}^{+^*}Z,Z\rangle dV=\int\limits_{M}\Big[\frac{1}{4}|\Delta Z|^2+\frac{7}{24}R|\nabla Z|^2
+\frac{1}{12}R^2|Z|^2\Big]dV\geq 0
\end{align}
with equality if and only if $Z=0$, since $R>0$.  Therefore $H^2=\{0\}$.
The proof for self-dual metrics is analogous.

\end{proof}

Finally, we prove the corollaries from the Introduction:

\begin{proof}[Proof of Corollary \ref{c1}]
For $k=3$ this is the standard metric on $S^4$, which is rigid.
Hitchin's metrics, $\{g_k\}_{k\geq 4}$, all have singular set $\Sigma = \RP^2$ 
with  self-intersection $-2$. Theorem \ref{mainthm} for self-dual 
metrics implies that 
\begin{align}
Ind^{SD}(S^4,g_k) = 15 - 4 \cdot 1 - 4 \cdot 2 = 3.
\end{align}
Since each $g_k$ is an unobstructed self-dual orbifold-cone metric with 
$\dim(H^0) = 3$, we have
\begin{align}
3 - \dim(H^1) = 3, 
\end{align}
which implies that $\dim(H^1) = 0$. Consequently, using the remarks 
about the Kuranishi map from the Introduction, these metrics are rigid. 
\end{proof}

\begin{proof}[Proof of Corollary \ref{orientable}]
Let $g$ be an unobstructed self-dual orbifold-cone metric on $(S^4,\Sigma^j)$, where $\Sigma^j$ is diffeomorphic 
to an orientable surface of genus $j\geq1$.  We know that $\chi(\Sigma^j)=2-2j$ and $[\Sigma^j]^2=0$.  Therefore 
\begin{align}
\begin{split}
Ind^{SD}(S^4,g)&=\frac{1}{2}(15\cdot 2-29\cdot 0)-4(2-2j)+4\cdot 0\\
&=7+8j.
\end{split}
\end{align}
Since $g$ is unobstructed, using Proposition \ref{bound} we have the inequality
\begin{align}
\label{ine}
7-\dim(H^1)\geq \dim(H^0)-\dim(H^1)=7+8j.
\end{align}
However, since $j\geq 1$ this inequality cannot hold, which 
proves the second part of the corollary.
\end{proof}

\begin{proof}[Proof of Corollary \ref{c2}]
Let $g$ be an unobstructed self-dual orbifold-cone metric on $S^4$ with singular set $\Sigma^j$, 
a smoothly embedded surface diffeomorphic to $j\#\RP^2$.  
We know that $\chi(j\#\RP^2)=2-j$.  Therefore
\begin{align}
\begin{split}
Ind^{SD}(S^4,g)&=\frac{1}{2}(15\cdot 2-29\cdot 0)-4(2-j)+4[\Sigma^j]^2\\
&=7+4j+4[\Sigma^j]^2.
\end{split}
\end{align}
Since $g$ is unobstructed we have the inequality
\begin{align}
7+4j+4[\Sigma^j]^2 \leq \dim(H^0).
\end{align}
Using the bounds on $\dim(H^0)$ in Proposition \ref{bound} we see that
\begin{align}
[\Sigma^j]^2\leq
\begin{cases}
-\frac{3}{4} &\text{ when $j=1$}\\
-2 &\text{ when $j=2$}\\
-\frac{1}{2}-j &\text{ when $j\geq 3$}.
\end{cases}
\end{align}
Combining these restrictions with the list of possible values of the 
self-intersection numbers (proved by Massey) \eqref{values}, completes the proof.
\end{proof}

\begin{proof}[Proof of Corollary \ref{c4}]

Let $g$ be an unobstructed LeBrun self-dual orbifold-cone metric on $n \# \CP^2$, with 
singular set $\Sigma = S^2$ and cone angle $2 \pi /p$. 
Clearly, for $n \geq 3$, the moduli space of such metrics 
is of dimension $3n - 6$, which is obtained by counting 
the moduli space of monopole points modulo the action 
of the group of hyperbolic isometries. 
We know that
\begin{align}
\chi(\Sigma)=(n+2) \text{ and } [\Sigma]^2=n.
\end{align}
Therefore
\begin{align}
Ind^{SD}(n\# \mathbb{CP}^2,g)&=\frac{1}{2}(15(n+2)-29\cdot n)-4\cdot 2+4\cdot n =-3n+7.
\end{align}
Since $g$ is unobstructed and $\dim(H^0)=1$ we have that
\begin{align}
\dim(H^1)=3n-6.
\end{align}
Since the dimension of the moduli space is greater than or equal 
to $3n - 6$, the action of $H^0$ on $H^1$ must be trivial. 
Therefore, the dimension of the moduli space is 
exactly $3n - 6$, so any sufficiently close
self-dual deformation of $g$ must be $S^1$-equivariant.
\end{proof}

\begin{proof}[Proof of Corollary \ref{c5}]

We can see that $H^2$ consists of parallel sections of $S^2_0(\Lambda^2_+)$ using the argument in the proof of 
Lemma \ref{lemma} with $R=0$, so $H^2 = \{0\}$ by assumption.
Similarly, since $Ric=0$, the standard Bochner argument works in the 
orbifold-cone setting to show that $H^0$ consists of parallel vector fields, and therefore
 $H^0 = \{0\}$, also by assumption.
Consequently, using the facts about the Kuranishi map from the 
Introduction, the moduli space is smooth near $g$, and the 
dimension is computed by Theorem \ref{mainthm}, yielding \eqref{rfdim}.

\end{proof}

\bibliography{ASD_Edge_Cone_references}

\end{document}